\documentclass[11 pt]{article}

\usepackage[latin1]{inputenc}
\usepackage{amsmath,amsfonts,stmaryrd,amsthm, enumitem, graphicx}

\usepackage{hyperref}

\usepackage{fullpage}
\usepackage{changes, comment}
\usepackage{soul}

\newtheorem{thm}{Theorem}
\newtheorem{lemma}[thm]{Lemma}
\newtheorem{prop}[thm]{Proposition}
\newtheorem{cor}[thm]{Corollary}

\theoremstyle{definition}
\newtheorem{definition}{Definition}
\newtheorem{rem}{Remark}

\newcommand{\cH}{\mathcal{H}}
\newcommand{\cI}{\mathcal{I}}
\newcommand{\cJ}{\mathcal{J}}

\makeatletter
% *, 2, 3, ...
\renewcommand*{\@fnsymbol}[1]{\ifcase#1\or$\heartsuit$\else$\clubsuit$\fi}
\makeatother

\title{On the number of coloured triangulations of $d$-manifolds}

\author{Guillaume Chapuy%
\thanks{This project has received funding from the European Research Council
(ERC) under the European Union's Horizon 2020 research and innovation 
programme (grant agreement No. ERC-2016-STG 716083 ``CombiTop'').
Email:~{\tt guillaume.chapuy@irif.fr}.
}
\\
\footnotesize\it CNRS, IRIF UMR 8243,\\
\footnotesize\it Universit\'e de Paris,\\
\footnotesize\it France.\\
\and Guillem Perarnau%
\thanks{
Email:~{\tt guillem.perarnau@upc.edu}.
G.P. acknowledges an invitation in Paris funded by the ERC grant CombiTop, during which this project was advanced.
}
\\
\footnotesize\it Departament de Matem\`atica Aplicada.\\
\footnotesize\it Universitat Polit\`ecnica de Catalunya,\\
\footnotesize\it Barcelona, Spain.\\
}

\renewcommand{\ll}{\preceq}

%\affil[$\heartsuit$]{%}
%\affil[$\clubsuit$]{

\begin{document}
\maketitle

\begin{abstract}
We give superexponential lower and upper bounds on the number of coloured $d$-dimensional triangulations whose underlying space is an oriented manifold, when the number of simplices goes to infinity and $d\geq 3$ is fixed.
	In the special case of dimension $3$, the lower and upper bounds match up to exponential factors, and we show that there are $2^{O(n)} n^{\frac{n}{6}}$ coloured triangulations of $3$-manifolds with $n$ tetrahedra. Our results also imply that random coloured triangulations of $3$-manifolds have a sublinear number of vertices.

Our upper bounds apply  in particular to coloured $d$-spheres for which they seem to be the best known bounds in any dimension $d\geq 3$, even though it is often  conjectured that exponential bounds hold in this case.

	We also ask a related question on regular edge-coloured graphs having the property that each $3$-coloured component is planar, which is of independent interest.

\end{abstract}

\section{Introduction and main results}

A famous question, sometimes attributed to Gromov~\cite{rivasseau2013spheres, gromov2010spaces} but going back at least to Durhuus and J\'onsson~\cite{DJ}, asks whether for any dimension $d\geq 2$ the number of inequivalent triangulations of the $d$-sphere by $n$ unlabelled simplices is bounded by $K^n$ for some constant $K=K(d)$. 
In dimension $d=2$, it is not difficult to see that the answer is yes by noticing that each planar triangulation can be encoded by a spanning tree and a parenthesis word (one can also use the explicit formula due to Tutte~\cite{Tutte}). In dimension $d\geq3$, this question is open. In the pioneering paper~\cite{DJ}, Durhuus and J\'onsson introduced a subclass of triangulated spheres called locally constructible, or \emph{LC}, and showed that their number is exponentially bounded. They conjectured that all $3$-spheres are LC, but this was disproved many years later by Benedetti and Ziegler in~\cite{BenedettiZiegler}. Other subclasses of spheres with exponential growth have been considered~\cite{Mogami, BenedettiOther,DJcausal, ColletEckmannYounan, AnoBenedetti}, but the question remains wide open.

A motivation for the study of triangulations comes from the discretization of space in quantum gravity, see~\cite{QuantumGeometry, rovelli2007quantum}. Recently there has been a renewed interest in this topic via coloured tensor models, which are a higher dimensional generalization of matrix integrals, see~\cite{Gur16,Carrance}. The objects that naturally arise in these models are \emph{coloured triangulations}: roughly speaking, a triangulation is \emph{coloured} if the vertices are coloured with colours from $1$ to $d+1$, and all colours appear in each $d$-simplex. These triangulations are defined by gluing in pairs the facets ($(d-1)$-dimensional faces) in a family of $n$ abstract coloured $d$-simplices, and need not be simplicial -- precise definitions are given below. Because each triangulation can be made coloured by an appropriate barycentric subdivision that multiplies the number of simplices by a  factor that only depends on $d$ (see e.g. Section 2 in~\cite{rivasseau2013spheres}), the answer to the ``Gromov question'' is the same for coloured and uncoloured triangulations. In this paper we will work only with coloured triangulations, that are nicer and more natural as combinatorial objects.

\smallskip
In~\cite{rivasseau2013spheres}, Rivasseau showed that the number of coloured triangulations of the $d$-sphere with $n$ \emph{labelled} simplices grows at most like $n^{\left(\frac{d-1}{2}+\frac{1}{d}\right)n}$ up to exponential factors of the form $K^n$. This bound improves the trivial bound $n^{\left(\frac{d+1}{2}\right)n}$ which counts \emph{all} complexes obtained by arbitrary gluings of $n$ labelled $d$-simplices along their facets, up to exponential factors. Equivalently there are at most $n^{\left(\frac{d-3}{2}+\frac{1}{d}\right)n}$ inequivalent (unlabelled) coloured triangulations of the $d$-sphere with $n$ $d$-simplices up to exponential factors, which as far as we know was prior to this work the best known upper bound in the direction of Gromov's question.

This naturally raises the question of improving further the constants driving this superexponential growth.
In this paper we address this question, but under a weaker topological constraint: instead of considering $d$-spheres, we consider triangulations whose underlying space is a $d$-manifold. Since spheres are manifolds, the  superexponential upper bounds that we obtain for $d$-manifolds apply in particular to $d$-spheres, and in fact they improve the ones of~\cite{rivasseau2013spheres}. We also give superexponential lower bounds  obtained by explicit constructions (of course these lower bounds do not apply to $d$-spheres). Our lower and upper superexponential bounds match in dimension $d=3$, but a gap remains in higher dimension. Our main result, Theorem~\ref{thm:main} below, summarizes these results.

%\subsection{Notation}
\subsection{Main results}
Here and everywhere in this paper, a \emph{manifold} is a topological manifold, that is to say a Haussdorff topological space that is locally homeomorphic to $\mathbb{R}^n$ (we do not consider manifolds with boundary). All the topological spaces we consider will be compact.

For $d\geq 2, n\geq 2$, we let $M_d(n)$ be the number of coloured $d$-dimensional triangulations of orientable manifolds, with $n$ $d$-simplices labelled from $1$ to $n$ (see formal definition in Section~\ref{sec:defs}). As we will see this number is zero if $n$ is odd as a consequence of orientability, and throughout the paper $n$ will be assumed to be even.
%GC-jan27: changed previous sentence

We will mostly be interested in the superexponential growth of the sequences we consider, {\it i.e.} we will
often disregard factors of the form $K^n$, and for this we will use the following notation 
%for $\alpha \geq 0$ and for a sequence $u(n)$ indexed by integers:
\renewcommand{\ll}{\preceq}
\begin{eqnarray}\label{defsymb}
	f(n) &\ll& g(n) \mbox{ iff } \exists K>0 \mbox{ such that for $n$ large enough, } f(n) \leq K^n g(n) \\
	f(n) &\asymp& g(n) \mbox{ iff } f(n)\ll g(n) \mbox{ and } g(n)\ll f(n).
\end{eqnarray}
Moreover, in all asymptotic statements in this paper, it is implicitly assumed that $d$ is constant and $n$ goes to infinity with $n$ being even.
%GC: do we even want $n$ to be even?

\begin{thm}[Main result]\label{thm:main}
	 For all $d\geq 3$ with $d\neq 4$ we have
		\begin{align}\label{eq:main}
			\displaystyle n^{\frac{n}{2d}} \ll \frac{1}{n!}M_d(n)  \ll n^{\left(\frac{1}{6}+(d-3) \frac{3}{20}\right) n},
		\end{align}
	and for $d=4$ we have 
		$$\displaystyle	n^{\frac{n}{8}} \ll \frac{1}{n!}M_4(n)  \ll n^{\frac{n}{3} }.$$
\end{thm}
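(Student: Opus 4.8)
The plan is to work entirely with the standard encoding of coloured triangulations as edge-coloured graphs: a coloured triangulation of a closed $d$-manifold with $n$ labelled $d$-simplices is the same thing as a properly $(d+1)$-edge-coloured $(d+1)$-regular graph $\Gamma$ on the vertex set $[n]$ all of whose vertex links are $(d-1)$-spheres (orientability simply forces $\Gamma$ bipartite and hence $n$ even, and is really only needed for the lower bound). The key topological input is that the sphere-link condition propagates to all smaller residues: for every $3$-subset of colours $\{i,j,k\}$, each connected component of the cubic $3$-edge-coloured subgraph $\Gamma_{\{i,j,k\}}$ encodes a $2$-sphere, hence is abstractly planar. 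So for the upper bound I would bound the number of $(d+1)$-edge-coloured $(d+1)$-regular graphs on $[n]$ with this ``all $3$-coloured components planar'' property and divide by $n!$; for the lower bound I would exhibit many of them. In dimension $3$ these two conditions coincide (a $3$-manifold gem is exactly a $4$-edge-coloured $4$-regular graph whose four $3$-coloured residues are planar), which is what makes the bounds match there.

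For the upper bound, write $g_{ij}$ (resp.\ $g_{ijk}$) for the number of $\{i,j\}$-residues (resp.\ $\{i,j,k\}$-residues) of $\Gamma$, i.e.\ the number of connected components of $\Gamma_{\{i,j\}}$ (even cycles) and of $\Gamma_{\{i,j,k\}}$. Euler's formula applied to each spherical $3$-coloured component gives, for every triple, the exact identity $g_{ij}+g_{jk}+g_{ik}=2g_{ijk}+\tfrac n2$, whence $\sum_{i<j}g_{ij}\ge\tfrac{d(d+1)}{12}n$: planarity forces all the bicoloured residue numbers to be of linear order in $n$. The second ingredient is an encoding lemma bounding the number of graphs with a prescribed residue profile $(g_{ij})$, obtained by building the $d+1$ perfect matchings one colour at a time and estimating, at each step, how many ways the new colour can be added so that the $3$-coloured pieces it completes are unions of spheres and the prescribed $g_{ij}$ are realised. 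Since there are only polynomially many profiles, $\tfrac1{n!}M_d(n)$ is then at most $n^{(d-1)n/2}$ (the ``trivial'' exponent coming from $((n-1)!!)^{d+1}/n!$) divided by the minimum, over profiles feasible for the Euler identities and for $1\le g_{ij}\le n/2$, of the product of $g$-powers that the lemma delivers; carrying out this optimisation is what produces the exponents $\tfrac16$, $\tfrac13$ and $\tfrac16+(d-3)\tfrac3{20}$. I expect the encoding lemma to be the main obstacle: a naive count that charges only one residue constraint to each added matching already beats the trivial bound but only yields the far weaker exponent $\tfrac{2d-3}{6}$ (equal to $\tfrac12$ when $d=3$), so one must extract several constraints from each new matching and, crucially, exploit the \emph{planarity} of the pieces already built rather than merely the cardinalities $g_{ij}$ -- controlling, roughly, in how many ``non-crossing'' ways a perfect matching can be superimposed on a union of bicoloured cycles. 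This is precisely where the peculiar constant $\tfrac3{20}$ enters, and where $d=4$ turns out to be a genuine boundary case of the optimisation for which only $n^{n/3}$ comes out.

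For the lower bound I would give an explicit construction. Fix once and for all an asymmetric coloured triangulation of $S^{d-1}$ (necessarily with $\Theta(d)$ simplices) and a ``cylinder'' block: a coloured triangulation of $S^{d-1}\times[0,1]$ with $\Theta(d)$ simplices whose two boundary components are copies of that fixed sphere. Take $k\asymp n/d$ disjoint copies of the block and glue their $2k$ boundary spheres in pairs according to an arbitrary perfect matching, choosing the identifications so that each resulting cyclic chain of blocks is an orientable $S^{d-1}$-bundle over $S^1$; the outcome is always a coloured triangulation of a closed orientable $d$-manifold with $n$ simplices. Since the blocks are recognisable and, the interface being asymmetric, glued in an essentially unique way, distinct matchings give inequivalent coloured triangulations up to a factor $K^n$; as the number of matchings is $\asymp k^{k}\asymp n^{n/(2d)}$, dividing by this $K^n$ and by the $n!$ relabellings gives $\tfrac1{n!}M_d(n)\gg n^{n/(2d)}$, which for $d=4$ reads $n^{n/8}$. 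This part should be essentially routine once the block is in hand; the only real care is the orientation bookkeeping and the verification that the chosen interface is rigid enough that no superexponential overcounting occurs.
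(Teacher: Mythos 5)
Your skeleton is right in outline---encode the triangulation as a $(d+1)$-edge-coloured regular graph, build it one colour class at a time, use Euler's formula on the spherical $3$-coloured residues to get the exponent $\tfrac16$ per step, and control the extension step by a planar/non-crossing encoding (this is essentially the paper's Lemma~\ref{lemma:treecount} combined with Lemma~\ref{lemma:3count} and the induction of Lemma~\ref{lemma:inductionUpperBound}). But there is a genuine gap where you wave at the constant $\tfrac{3}{20}$. You attribute it to a refined count of how a matching can be superimposed on the bicoloured cycles, i.e.\ to a sharper exploitation of \emph{planarity} of the $3$-coloured residues. That is not where it comes from, and the paper's own analysis indicates it cannot come from there: using only the planarity of all $3$-coloured residues (property $(P)$) the paper gets exactly $a_d=\tfrac16$ at every inductive step, hence the exponent $\tfrac{d-2}{6}$, which is strictly worse than $\tfrac16+(d-3)\tfrac{3}{20}$ for $d\ge 4$. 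The missing idea is two-dimensional-higher homology: one first proves that for a manifold every residue $|X(G_\cI)|$ with $1\le|\cI|\le d$ is a disjoint union of \emph{rational homology spheres} (the implication $(ii)\Rightarrow(iii)$ of Proposition~\ref{prop:criteria}, via links in the barycentric subdivision and the join formula for reduced homology), and then applies the Euler--Poincar\'e relation to \emph{five}-colour residues that are rational homology $4$-spheres; an averaging over triples inside a $5$-set yields $\kappa_{i,j}-\kappa_{i,j,k}\le\tfrac{3}{20}n$ (Lemma~\ref{lemma:4count}). This also explains the $d=4$ exception correctly: there the only $5$-colour ``residue'' is $|X(G)|$ itself, which is a manifold but not necessarily a homology sphere, so one must fall back on the $3$-colour estimate twice, giving $\tfrac16+\tfrac16=\tfrac13$. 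Your plan as written, resting on planarity alone, would not reach the claimed exponents for $d\ge 5$.

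On the lower bound your chain-of-blocks construction is the same idea as the paper's (which is itself inspired by Heegaard-type gluings), and the counting by matchings/permutations of $\Theta(n/d)$ blocks does give $n^{n/(2d)}$ up to exponential factors. The two real obligations you leave open are (1) exhibiting the coloured cylinder block with $\Theta(d)$ simplices and a rigid interface, and (2) verifying that the glued complex is a manifold---for coloured triangulations this is not automatic and must be checked on every vertex link. The paper discharges both by writing down the colourful graph explicitly and showing each link satisfies the dipole-removal (melonic) criterion, which certifies it is a sphere; some such combinatorial verification is unavoidable and should not be treated as routine.
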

Since the upper and lower bounds in~\eqref{eq:main} match in dimension $d=3$, we obtain:
\begin{cor}\label{cor:dim3}
	The number of coloured triangulations of orientable $3$-manifolds with $n$ labelled tetrahedra satisfies $$\displaystyle \frac{1}{n!}M_3(n)  \asymp n^{\frac{n}{6}}.$$
\end{cor}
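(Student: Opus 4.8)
\medskip

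The corollary is immediate from Theorem~\ref{thm:main}: for $d=3$ the upper exponent $\frac16+(d-3)\frac3{20}$ collapses to $\frac16$, and the lower exponent $\frac1{2d}$ is also $\frac16$, so both bounds in~\eqref{eq:main} read $n^{n/6}$. Hence the substance is entirely in the $d=3$ case of Theorem~\ref{thm:main}, which I would approach as follows. The starting point is the standard encoding recalled in Section~\ref{sec:defs}: an orientable coloured $3$-triangulation with $n$ tetrahedra is the same datum as a connected, bipartite, properly $4$-edge-coloured $4$-regular graph on vertex set $[n]$ -- the four colour classes being perfect matchings $M_1,\dots,M_4$ -- subject to the \emph{manifold condition} that every $3$-coloured residue (delete the edges of one colour) has all its connected components planar, equivalently representing $S^2$; bipartiteness records orientability and forces $n$ even. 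Call such a graph \emph{planar-componented}. Up to factors $K^n$, $\frac1{n!}M_3(n)$ is the number of isomorphism classes of such graphs, and since closed $3$-manifolds have $\chi=0$ automatically the residue condition is the only topological constraint.

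For the lower bound $\frac1{n!}M_3(n)\gg n^{n/6}$ I would give an explicit construction indexed by a ``matching-type'' datum on a linear-size set. Prepare $\Theta(n)$ copies of a small, rigid coloured gadget, each carrying two distinguished facets of a fixed colour and an internal ``address'' visible in every colour-preserving isomorphism; glue the gadgets in pairs according to an arbitrary perfect matching on the copies, respecting the colouring and the bipartition; and close up the result into a connected orientable $3$-manifold with exactly $n$ tetrahedra using $O(n)$ auxiliary tetrahedra (for instance so that the underlying space is a connected sum of copies of $S^2\times S^1$). A perfect matching on $\Theta(n)$ elements has $\asymp n^{n/6}$ values; the addressing guarantees that distinct matchings give non-isomorphic triangulations, and this injectivity is the only delicate point of the construction.

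For the upper bound $\frac1{n!}M_3(n)\ll n^{n/6}$, the guiding principle is that planar (equivalently, spherical) objects grow only exponentially, so the superexponential growth must be concentrated in a single matching-type choice. Writing a gem as $G=H\cup M_4$ with $H=M_1\cup M_2\cup M_3=G_{\widehat 4}$, note that $H$ is itself a $3$-edge-coloured cubic graph that is planar-componented (being a residue of a manifold gem), and that the number of such $H$ on $[n]$ is $\ll n!\,C^n$ for some constant $C$ -- because planar maps are essentially tree-encoded, so planar cubic graphs grow only exponentially even after labelling. Consequently $M_3(n)=\sum_H\#\{M_4:\ \dots\}\ \ll\ n!\,C^n\cdot\max_H\#\{M_4:\ \dots\}$, and so $\frac1{n!}M_3(n)\ll\max_H\#\{M_4:\ \dots\}$; thus it suffices to show that, for every planar-componented $3$-coloured cubic graph $H$ on $[n]$, the number of perfect matchings $M_4$ for which each of the three residues obtained by adjoining $M_4$ to two colour classes of $H$ is again planar-componented is $\ll n^{n/6}$.

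This last estimate is the crux of the argument, and the step I expect to be the main obstacle. A naive attempt -- merely prescribing the cycle structure of each $M_4\cup M_i$ -- does not suffice, since that alone still leaves $\asymp n^{n/2}$ matchings; the real point is that a perfect matching which is simultaneously ``planarly compatible'' with three prescribed matchings is very rigid, and quantifying exactly how rigid calls for an entropy/counting argument ranging over the admissible bicoloured-cycle structures, with the exponent $\frac16$ emerging as the optimum of the associated optimization problem. This is precisely where the question raised in the abstract -- on regular edge-coloured graphs whose $3$-coloured components are planar -- enters, although for the present purpose a sufficient rather than optimal bound is enough. In higher dimensions the same scheme, applied to the links of codimension-$3$ faces (which must be $2$-spheres), yields the exponent $\frac16+(d-3)\frac3{20}$ with $d=4$ requiring a separate treatment; for $d=3$ this upper bound matches the construction above, which is why the corollary is an equality.
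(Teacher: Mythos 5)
The first paragraph of your proposal is exactly the paper's proof of the corollary: it is an immediate consequence of Theorem~\ref{thm:main}, since for $d=3$ both exponents in~\eqref{eq:main} equal $\tfrac16$. If that were all you claimed, the proof would be complete and identical to the paper's.

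However, you then take the substance to lie in the $d=3$ case of Theorem~\ref{thm:main} and sketch it, and there your sketch has genuine gaps at precisely the points you flag. For the upper bound, your reduction to ``count the matchings $M_4$ compatible with a fixed planar-componented cubic $H$'' is the right decomposition, but you leave the key estimate as an acknowledged obstacle and guess at an entropy optimization over cycle structures. The paper's actual mechanism is simpler and needs only \emph{one} residue, not simultaneous compatibility with three: (1) since each $3$-coloured residue $X(G_\cI)$ is a union of $2$-spheres, the Euler--Poincar\'e relation $\kappa^{(2)}_\cI = 2\kappa_\cI + n/2$ plus averaging over the three pairs in $\cI$ yields a pair $i,j$ with $\kappa_{i,j}-\kappa_{i,j,k}\le n/6$ (Lemma~\ref{lemma:3count}); (2) given $G_{\{i,j\}}$ with $c$ components, the number of ways to add the colour-$k$ matching so that $G_{\{i,j,k\}}$ is planar with $k$ components is at most $2^{5n}n^{c-k}$, via an encoding by a labelled plane forest on the $\{i,j\}$-cycles together with non-crossing arch systems in the faces (Lemma~\ref{lemma:treecount}). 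These two lemmas together give the $n^{n/6}$ bound; neither appears in your sketch. For the lower bound, your gadget-and-matching scheme is in the spirit of the paper's construction (two permutations of length $\Theta(n)$ glued into a double path of gadgets), but you assert that injectivity is ``the only delicate point,'' whereas the main work is verifying that the resulting complex is actually a manifold -- the paper does this by checking that every vertex link reduces to a single dipole by dipole removals (Lemma~\ref{lemma:dipoles}), hence is a sphere, and then invoking the implication $(i)\Rightarrow(ii)$ of Proposition~\ref{prop:criteria}. Without these ingredients the sketch of Theorem~\ref{thm:main} for $d=3$ is not a proof.
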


These results raise the question of determining the constant
$$
\displaystyle
\alpha_d := \limsup_{n\to \infty} \frac{\log ( M_d(n)/n! )}{n\log n}.
$$
Our main theorem shows that $\alpha_3=1/6$, $\alpha_4\in[\tfrac{1}{8},\tfrac{1}{3}]$, and for $d\geq 5$ 
\begin{align}\label{eq:alpha}
	\frac{1}{2d}\leq \alpha_d\leq \frac{1}{6}+ (d-3)\frac{3}{20},
\end{align}
which leaves an important gap especially for large $d$. We believe that the lower bound is closer to the truth, and we conjecture that $\alpha_d$ goes to zero when $d$ goes to infinity.

\subsection{Additional results, related work and comments}

In the uncoloured setting, the fact that the number of triangulated $3$-manifolds with $n$ unlabelled tetrahedra grows \emph{at least} as $n^{\epsilon n}$ for explicit positive values of $\epsilon$ was proved in~\cite{ambjorn1991three}~(see also~\cite{benedetti2010locally}). Their construction is based on Heegaard gluings of high genus triangulations and inspired our general lower bound construction. Using the distribution of short cycles in the configuration model~\cite{wormald1999models}, in~\cite{dunfield2006finite} it was proved that the probability the underlying space of a $3$-dimensional triangulation  with $n$ unlabelled tetrahedra is a $3$-manifold is~$o(1)$. 
%~\cite{DunfieldThurston}} --
It may be possible to make the argument in~\cite{dunfield2006finite} quantitative in order to obtain an upper bound for the number triangulated $3$-manifolds of the form $n^{(1-\epsilon)n}$ for an explicit $\epsilon>0$. However, the uncoloured case seems combinatorially more involved than the coloured one, and this would probably not lead to a sharp value of $\epsilon$ as the one we have here in the coloured case. 

%We have also mentioned Rivasseau's result that applies to homology $d$-spheres~\cite{rivasseau2013spheres}. Our results also apply to this case. 
In fact our upper bounds extend to a much larger class of triangulations than $d$-manifolds, roughly speaking to triangulations whose  residues with a small number of colours are spheres.
See Theorem~\ref{thm:upperBounds} on Section~\ref{sec:UB} for our most general upper bounds. 
Moreover for (manifold) homology $d$-spheres of dimension $d=4$, we obtain a slightly better upper bound than the one we have for $M_4(n)$.

Finally, it is natural to ask if our enumerative results have probabilistic consequences, especially in dimension $d=3$ where our upper and lower bounds match. Because they match only up to exponential factors, the only events that there is hope to control are the ones with exponentially small probability. Indeed we can show, as a corollary of our proofs:
\begin{thm}\label{thm:proba}
	%GC: rewrote thm
Consider a coloured $3$-dimensional triangulation with $n$ labelled tetrahedra whose underlying space is an orientable $3$-manifold, chosen uniformly at random among such objects. Let $V_n$ be its number of vertices.		Then $V_n = O(\frac{n}{\log n})$ with high probability. 
			
			More precisely, for all $c>0$ there exists $K$ such that for all $n\geq 1$,
			$$
	\mathbf{Pr}\left(V_n \leq\frac{K n}{\log n} \right) \geq 1-e^{-cn}.
	$$
%where $K, c>0$ are absolute constants.
\end{thm}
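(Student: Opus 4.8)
The plan is to deduce Theorem~\ref{thm:proba} from the enumerative bounds already established, specifically from the sharp count $\frac{1}{n!}M_3(n) \asymp n^{n/6}$ of Corollary~\ref{cor:dim3} together with a refined \emph{upper} bound on the number of such triangulations having \emph{many} vertices. The key observation is that in the coloured setting a vertex corresponds to a connected component of the $d$-coloured subgraph obtained by deleting one colour class of edges from the $(d+1)$-regular bipartite ``coloured graph'' encoding the triangulation (here $d=3$, so a vertex is a connected component of a $3$-coloured residue, which for a manifold is forced to be a sphere, i.e.\ a planar $3$-coloured component). So ``$V_n$ large'' means the triangulation decomposes into many small $3$-coloured pieces, and the heart of the matter is that such constrained triangulations are rare compared to the total count $n^{n/6+o(n)}$.

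First I would set up the encoding of coloured $3$-manifold triangulations as $4$-regular edge-coloured graphs with colours $\{0,1,2,3\}$, and recall (from the machinery underlying the proof of the upper bound in Theorem~\ref{thm:main}, Section~\ref{sec:UB}) that the quantity governing the superexponential growth is essentially $\prod_i v_i!$ where the $v_i$ range over the sizes of the colour-$i$ residues' components, or more precisely that the number of triangulations with a prescribed ``residue profile'' is controlled by a product of factorials of the component sizes. The point is that $n^{n/6}$ arises as the extremal value of this product under the constraints, and it is attained only when the residue structure is as ``unbalanced'' as possible — in particular with $O(1)$ vertices. If instead $V_n \geq t$, then at least $t$ of the $3$-coloured components are distinct, which forces the relevant product of factorials to lose a factor that is superexponentially small relative to $n^{n/6}$ once $t \gg n/\log n$. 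Concretely I would show: the number of coloured $3$-manifold triangulations with $n$ tetrahedra and at least $t$ vertices is $\ll n^{n/6} \cdot e^{-\Omega(t \log t)}$ or some comparable bound — the exact shape to aim for is one where $t \geq Kn/\log n$ makes the exponent beat $cn$.

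Then the proof of the theorem is a one-line union/Markov argument:
\begin{align*}
\mathbf{Pr}\left(V_n > \tfrac{Kn}{\log n}\right) = \frac{\#\{\text{triangulations with } V_n > Kn/\log n\}}{M_3(n)/n! \cdot n!^{?}} \leq \frac{K_1^n \, n^{n/6} e^{-\Omega((n/\log n)\log(n/\log n))}}{K_2^{n} n^{n/6}} \leq e^{-cn}
\end{align*}
for $K$ chosen large enough in terms of $c$, since $(n/\log n)\log(n/\log n) = (n/\log n)(\log n - \log\log n) \sim n$, and a large multiplicative constant $K$ in front of $n/\log n$ turns this $\Theta(n)$ quantity into a quantity exceeding $cn$ plus the loss $\log K_1/K_2$ per tetrahedron coming from the exponential factors. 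I would need to be a little careful that the exponent really scales like $t\log t$ and not merely $t$, because only the former survives against the $K^n$ slack; this is where the combinatorics of ``spending'' vertices — each new small planar component costing a genuine factorial factor in the denominator of the residue-profile count — has to be made precise.

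The main obstacle I anticipate is exactly this last quantitative point: extracting from the upper-bound proof of Section~\ref{sec:UB} a bound on triangulations with $\geq t$ vertices whose superexponential \emph{deficit} relative to $n^{n/6}$ is genuinely of order $t\log n$ (equivalently $t\log t$ up to the regime of interest) rather than something weaker. The upper-bound argument as stated bounds the total number; I would need to revisit its main inequality — presumably an optimization of $\prod_j v_j!$ (or its logarithm $\sum_j v_j\log v_j$) subject to $\sum_j v_j$ being linear in $n$ and the number of parts being related to $V_n$ — and observe that increasing the number of parts from $O(1)$ to $t$ strictly decreases this product by at least $e^{\Omega(t\log(n/t))}$ by convexity (merging parts only increases $\sum v_j \log v_j$). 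Controlling this convexity loss uniformly, and checking it dominates the $K^n$ error terms exactly when $t \gtrsim n/\log n$, is the crux; everything else is the routine Markov-inequality packaging above.
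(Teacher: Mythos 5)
Your high-level strategy is the same as the paper's: bound the number of coloured $3$-manifold triangulations with many vertices by revisiting the upper-bound machinery, then divide by a lower bound on the total count (which the paper takes from its explicit construction, and you correctly take from Corollary~\ref{cor:dim3}; the paper additionally checks that the constructed triangulations themselves have few vertices, but for the probability statement the total count suffices). You also correctly identify that vertices are components of $3$-coloured residues and that the target deficit must be of order $t\log t\sim Kn$ for $t=Kn/\log n$. However, the crux — proving that triangulations with $\geq t$ vertices are fewer by a factor $e^{-\Omega(t\log n)}$ — is exactly the step you leave open, and the mechanism you propose for it does not work as stated. First, the ``residue-profile count controlled by $\prod_j v_j!$'' is not established anywhere in the upper-bound argument (which counts by building the graph colour by colour, not by summing over profiles), so there is nothing to apply convexity to. Second, and more seriously, the convexity bound you state, $e^{\Omega(t\log(n/t))}$, is quantitatively insufficient in the relevant regime: for $t\asymp n/\log n$ one has $t\log(n/t)\asymp \tfrac{n\log\log n}{\log n}=o(n)$, which is swallowed by the $K^n$ slack in all the estimates. (The minimum deficit of $\sum_j v_j\log v_j$ over partitions of $n$ into $t$ parts is attained at the maximally unbalanced partition and is of order $t\log n$, not $t\log(n/t)$ — but you would still need to connect such a quantity to the actual count.)

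The missing idea is much more direct and is already latent in the proof of Lemma~\ref{lemma:3count}. If $V_n>\frac{Kn}{\log n}$, then by pigeonhole over the four colour-triples some $\kappa_{i,j,k}\geq \frac{Kn}{4\log n}$. The intermediate inequality~\eqref{eq:last} in the proof of Lemma~\ref{lemma:3count} is \emph{stronger} than the lemma's stated conclusion: it gives $\kappa_{i,j}\leq \tfrac{2}{3}\kappa_{i,j,k}+\tfrac{n}{6}$ for some pair $i,j$, hence
$$
\kappa_{i,j}-\kappa_{i,j,k}\;\leq\;\frac{n}{6}-\frac{1}{3}\kappa_{i,j,k}\;\leq\;\frac{n}{6}-\frac{Kn}{12\log n}.
$$
Feeding this into Lemma~\ref{lemma:treecount} (choose the planar graph $G_{\{i,j,\ell\}}$ in at most $n^n$ ways, then the colour-$k$ edges in at most $2^{5n}n^{\kappa_{i,j}-\kappa_{i,j,k}}$ ways) bounds the number of offending triangulations by $2^{5n}n^{\frac{7n}{6}}\cdot n^{-Kn/(12\log n)}=2^{(5-K/12)n}n^{\frac{7n}{6}}$, and the factor $n^{-Kn/(12\log n)}=e^{-Kn/12}$ is exactly the exponential saving, with rate proportional to $K$, that beats any prescribed $c$ after dividing by the total count. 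Without this (or an equivalent) quantitative input, your argument does not close.
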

It is natural to expect that the number of vertices is in fact $O(\log n)$ with high probability as in the 2-dimensional case, but our techniques do not enable to prove it.

%\subsection{Additional results}
%
%Our upper bounds apply not only to manifolds but to homology spheres (that is to say, complexes having the same homology as spheres; they need not be manifolds). Let $S_d(n)$ be the number of $(d+1)$-colourful graphs $G$ on $[1..n]$ such that $|X(G)|$ is an homology sphere.
%We have:
%\begin{prop}[Homology spheres]\label{prop:homologySpheres}
%	 For all $d\geq 3$ we have the upper bound
%		\begin{align}\label{eq:upperBoundS}
%		\frac{1}{n!}S_d(n)  \ll n^{\left(\frac{1}{6}+(d-3) \frac{3}{20}\right) n}.
%		\end{align}
%\end{prop}
%Note that for $d\neq4$ the upper bound is the same as for manifolds, but for $d=4$ it is slightly better.

\section{Colourful graphs and triangulations}

\subsection{Definitions}
\label{sec:defs}
\begin{definition}
	For $d\geq 1$ and $n$ even, a \emph{(d+1)-colourful graph} of order $n$ is a bipartite $(d+1)$-regular multigraph on $[1..n]$, equipped with a colouring of its edges with colours in $[1..d+1]$, such that each vertex is incident to all colours.	
\end{definition}

We equip the colourful graph with a colouring of its vertex set, where one part of the bipartition is assigned colour \emph{white} and the other one, colour \emph{black}.

\begin{figure}
	\centering
		\includegraphics[width=0.6\linewidth]{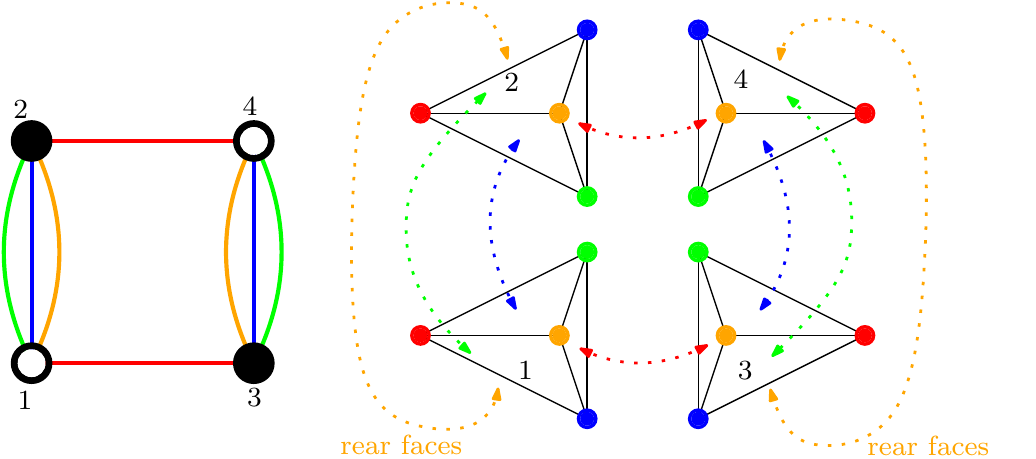}
		\caption{A $(d+1)$-colourful graph $G$ with $d=3$ and $n=4$, and the corresponding cell complex. The resulting space $|X(G)|$ is a $3$-sphere -- an easy way to see that is to perform first the six non-red gluings, which clearly gives two disjoint $3$-balls, that are then glued along their boundary via the red gluings. Note that the complex $X(G)$ is \emph{not} simplicial, since the two tetrahedra on each side share the same set of vertices. Since the sphere is a manifold,  this object contributes to the number $M_3(4)$.}
\end{figure}

Given a $(d+1)$-colourful graph $G$, we can construct a topological object as follows.
For each  $v\in [1..n]$, we consider an abstract $d$-simplex, and we colour its vertices from $1$ to $(d+1)$.
%, in direct/indirect orientation according to whether $v$ is white/black in the bipartite graph.
Apart from these colours, the vertices of each simplex are unlabelled. We see each simplex as a solid body being a copy of the regular $d$-simplex $\{x_1+\dots+x_{d+1}=1, x_i\geq0\}$ equipped with its Euclidean topology. Now for each edge $e=\{u,v\}$ of $G$ of colour $i$, we consider the unique facet in each of the two simplices corresponding to $u$ and $v$, whose vertices are coloured by $[1..d\!+\!1]\setminus \{i\}$. We glue these two facets together according to the unique isometric gluing that preserves colours, and we repeat this procedure for each edge of $G$.

%GC-jan27: added "analogues of" in footnote
We call $X(G)$ the corresponding cell complex. Complexes obtained in this way are called \emph{labelled $d$-dimensional coloured triangulations}. We denote by $|X(G)|$ the resulting topological space, and we observe that it is orientable because $G$ is bipartite \footnote{The space $|X(G)|$ can also be defined for non-bipartite analogues of colourful graphs,  giving rise to triangulated non-orientable topological spaces.}.
In fact, $d$-dimensional coloured triangulations of orientable topological spaces with $n$ simplices are in bijection with $d$-colourful graphs of order $n$, as one can recover the graph $G$ from the triangulation $X(G)$ by taking its dual graph, in which vertices correspond to highest dimensional cells and edges to $(d-1)$-dimensional incidences between these cells.
%GC-jan27: added
In particular, such objects exist only if $n$ is even, as any regular bipartite graph has an even number of vertices.

While for $d=2$ the space $|X(G)|$ is always a manifold, we emphasize that if $d\geq 3$ this is not always the case.
%GC added (this depends on a lemma which is stated later. Is that fine?)
To see this, notice that the link of a given vertex in $X(G)$ can be any connected coloured $(d-1)$-dimensional triangulation, and therefore for $d-1>1$ it can have nontrivial homology, contradicting Criterion $(iii)$ in Proposition~\ref{prop:criteria} below.
%e space $|X(G)|$ is not necessarily a manifold.

The bijection described above gives an alternative definition for the number of coloured $d$-dimensional triangulations with $n$ $d$-simplices labelled from $1$ to $n$ whose underlying space is an orientable manifold.
\begin{definition}
	We let $M_d(n)$ be the number of $(d+1)$-colourful graphs $G$ on $[1..n]$ such that $|X(G)|$ is a $d$-manifold.
\end{definition}
%As in the introduction, $M_d(n)$ can also be defined as the number of coloured $d$-dimensional triangulations with $n$ $d$-simplices  labelled from $1$ to $n$ whose underlying space is an orientable manifold.

 \subsection{Homology and residues}

Let $G$ be a $(d+1)$-colourful graph and $\cI\subseteq [1..d+1]$. We let  $G_\cI$ be the graph on the same vertex set as $G$, keeping only the edges whose colour is in $\cI$. Up to relabelling colours keeping their relative natural order, $G_\cI$ is an $|\cI|$-colourful graph associated with some coloured triangulation $X(G_\cI)$ of dimension $|\cI|-1$.

It is easy to see that connected components of $G_\cI$ are in bijection with $(d+1-|\cI|)$-dimensional cells of $X(G)$ whose colours do not belong to $\cI$.
For a proof, see~\cite{BrachoMontejano} in which these connected components are called~\emph{residues} of $G$.
For example, for each $i\in [1..d+1]$, vertices of $X(G)$ of colour $i$ are in bijection with connected components of $G_\cI$ for $\cI=[1..d+1]\setminus \{i\}$.

In this paper we will work with singular homology of topological spaces.
Given a $(d+1)$-colourful graph $G$, we say that $|X(G)|$ is a \emph{rational homology sphere of dimension $d$} if it has the same homology groups over the field of rationals as a $d$-sphere. Equivalently, all its Betti numbers are zero, except the $0$-th and $d$-th ones that are equal to one. We insist on the fact that no other property is required in this definition, in particular a rational homology sphere is not necessarily a manifold. Note that each integral homology sphere is a rational homology sphere, but the converse is not true in general.

\begin{prop}\label{prop:criteria}%\label{lemma:linksAreHomologySpheres}
        Let $G$ be a $(d+1)$-colourful graph, and consider the three following properties
        \begin{itemize}
        \item[(i)]
                        For any $I \subseteq [1..d+1]$ such that
 $1\leq |\cI|\leq d$,  $|X(G_\cI)|$ is a disjoint union of spheres (of dimension $|\cI|-1$);
                                \item[(ii)]
                        $|X(G)|$ is a $d$-manifold;
                \item[(iii)]
                        For any $I \subseteq [1..d+1]$ such that
			$1\leq |\cI|\leq d$,   $|X(G_\cI)|$ is a disjoint union of rational homology spheres (of dimension $|\cI|-1$).
        \end{itemize}
        Then one has: $(i)\Rightarrow (ii)\Rightarrow (iii)$.
        \end{prop}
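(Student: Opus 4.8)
My plan is to prove the two implications separately, since they rely on quite different ideas. For $(i)\Rightarrow(ii)$, the key observation is that being a manifold is a local condition: $|X(G)|$ is a $d$-manifold if and only if the link of every vertex (more precisely, of every point, but it suffices to check vertices of the complex) is a $(d-1)$-sphere. So I would first recall/establish that for a point $p$ in the interior of a cell of $X(G)$ whose colour set is $\cJ\subseteq[1..d+1]$, a neighbourhood of $p$ is homeomorphic to $\mathbb{R}^{d-1-|\cJ|}\times(\text{cone on the link of }p)$, and the link of $p$ is itself a coloured triangulation, namely it decomposes as a join of the boundary of the simplex spanned by $\cJ$ with the residue complex $X(G_{\cJ})$ restricted to the relevant connected component — so the link is (up to join with a sphere) exactly $X(G_{\cJ})$ for $\cI = \cJ$ taken appropriately, or rather for the complementary colour set. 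Concretely, the relevant statement is: the link of a cell coloured by $\cJ$ is a join $\partial\Delta^{|\cJ|-1} * |X(G_{[1..d+1]\setminus\cJ})^{\mathrm{comp}}|$ where the second factor is the component of the residue through that cell, and since the join of spheres is a sphere ($S^a * S^b = S^{a+b+1}$), hypothesis $(i)$ applied to $\cI = [1..d+1]\setminus\cJ$ forces every such link to be a sphere of the correct dimension. Taking $\cJ$ to range over all nonempty proper colour subsets covers all cells of positive codimension, and the top-dimensional cells and their interiors are manifold points trivially; hence $|X(G)|$ is a manifold. I should be slightly careful that when $|\cJ|=d+1$ (a vertex of $X(G)$, if the triangulation were simplicial) the residue has $0$ colours — but in the colourful setting $\cJ$ never has size $d+1$ for a cell of $X(G)$ with the corner structure used here; the vertices of $X(G)$ correspond to $\cI$ of size $d$, i.e. $\cJ = [1..d+1]\setminus\{i\}$ of size $d$, and then the residue $X(G_{\{i\}})$ is $0$-dimensional, a disjoint union of points, consistent with the link being $S^{d-1}$. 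So the endpoints of the range $1\le|\cI|\le d$ are exactly what is needed.

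For $(ii)\Rightarrow(iii)$, I would argue that if $|X(G)|$ is a $d$-manifold then the link of every point is a $(d-1)$-sphere, in particular a rational homology sphere. Then, running the join decomposition of the previous paragraph in reverse: for a fixed $\cI$ with $1\le|\cI|\le d$, pick a cell of $X(G)$ whose colour set is $\cJ=[1..d+1]\setminus\cI$ and which lies in a prescribed connected component of the residue $G_\cI$; its link in $X(G)$ is $\partial\Delta^{|\cJ|-1} * C$, where $C$ is the corresponding component of $|X(G_\cI)|$, a space of dimension $|\cI|-1$. Since $|X(G)|$ is a manifold, this link is a homotopy/homology $(d-1)$-sphere; and a join $A * B$ is a rational homology sphere iff both $A$ and $B$ are (this follows from the Künneth/join formula $\widetilde{H}_*(A*B) \cong \widetilde{H}_{*-1}(A\wedge B)$ together with $\widetilde{H}_*(A\wedge B)$ being built from tensor and Tor of reduced homologies over $\mathbb{Q}$, where over a field Tor vanishes). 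Since $\partial\Delta^{|\cJ|-1}$ is a genuine sphere, we conclude $C$ is a rational homology $(|\cI|-1)$-sphere. As the component was arbitrary, $|X(G_\cI)|$ is a disjoint union of rational homology spheres, which is $(iii)$.

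The main obstacle, and the step I would spend the most care on, is making the join/link decomposition of $X(G)$ precise and correctly bookkeeping dimensions and colour sets — i.e. establishing cleanly that the link of a cell coloured by $\cJ$ in a coloured triangulation is the join of $\partial\Delta^{|\cJ|-1}$ with the appropriate residue subcomplex $X(G_{[1..d+1]\setminus\cJ})$. This is a standard fact about the "barycentric"/coloured structure (each point of $|X(G)|$ has a well-defined colour set, and the star of a point splits as a product along colours inside versus outside $\cJ$), and is exactly the content cited to \cite{BrachoMontejano}; I would quote it and then the two implications follow from the purely topological lemma that joins of spheres are spheres and that $A*B$ is a $\mathbb{Q}$-homology sphere iff $A$ and $B$ are. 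A secondary point worth a sentence is that for a topological manifold "link is a sphere" must be replaced by "link is a homology sphere" in the $(ii)\Rightarrow(iii)$ direction anyway — which is fortunate, since it is exactly the weakened conclusion $(iii)$ asks for, and it is why $(iii)$ cannot be strengthened back to $(i)$ in general.
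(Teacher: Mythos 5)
Your proposal is correct and follows essentially the same route as the paper: the link of (the barycentre of) a cell with colour set $\cJ$ decomposes as the join of $\partial\Delta^{|\cJ|-1}$ with the corresponding residue component (the fact cited to \cite{BrachoMontejano}), whence $(i)\Rightarrow(ii)$ by ``join of spheres is a sphere,'' and $(ii)\Rightarrow(iii)$ by combining the fact that links in a simplicial triangulation of a topological manifold are homology spheres with the rational join/K\"unneth formula --- exactly the Poincar\'e-polynomial computation $(P_U(x)-1)(P_V(x)-1)=x^{d-2}$ in the paper. (Only a cosmetic slip: in your aside about vertices of $X(G)$ you swap the roles of $\cI$ and $\cJ$ --- a vertex of colour $i$ has $\cJ=\{i\}$ and residue $X(G_{[1..d+1]\setminus\{i\}})$ of dimension $d-1$ --- but this does not affect the argument.)
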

\begin{proof}
        Let $H$ be a connected component of $G_{\cI}$, and let $\sigma$ be the $\cI^c$-coloured cell of $X(G)$ corresponding to $H$. Let $b_\sigma$ be the barycentre of $\sigma$, which is a vertex in the barycentric subdivision $X(G)'$ of $X(G)$. It is easy to check (see \cite[Proposition 2.5]{BrachoMontejano} and the sentence before it for a proof) that the topological join $|X(H)|*(\delta \sigma)'$ is the link of $b_\sigma$ in $X(G)'$ (in the case where $|\cI|=d$, i.e. $\sigma$ is a $0$-simplex, we conventionally understand this join as being just $|X(H)|$; in the case where $|\cI|=1$, i.e. $H$ is just one edge, we conventionally understand $|X(H)|$ as the disjoint union of two isolated vertices).

        Therefore the implication $(i)\Rightarrow (ii)$ is clear. Let us prove $(ii)\Rightarrow (iii)$.
        Let $U=|X(H)|$ and $V=(\delta \sigma)'$, we have the isomorphism 
$$
	\tilde H_{k}(U\times V)\equiv \tilde H_{k+1}(U*V) \oplus \tilde H_k(U) \oplus \tilde H_k(V),
$$
        where $k\geq 0$ and $\tilde H_k$ is the $k$-th reduced homology group over the rationals. By taking ranks, multiplying by $x^{k}$ and summing over $k$, we get
$$
        P_{U\times V}(x)-1 = x^{-1} \big( P_{U* V}(x)-P_{U*V}(0) \big) + P_U(x)-1 +P_V(x)-1,
$$
        where $P_X$ denotes the Poincar\'e polynomial of the topological space $X$.
        Now if $|X(G)|$ is a manifold, so is $|X(G)'|$, and since $X(G)'$ is a \emph{simplicial} complex, the link $U*V$ is a homology sphere, see e.g.~\cite[Prop. 5.2.4]{shastri2016basic}. We thus have $P_{U*V}(x)=1+x^{d-1}$, and since $P_{U \times V}=P_U P_V$  we finally obtain
        $$(P_U(x)-1)(P_V(x)-1) = x^{d-2}.$$
        Since $P_U=P_{|X(H)|}$ has degree $|\cI|-1$, it follows that $P_U(x)-1=x^{|\cI|-1}$, i.e. $|X(H)|$ is a rational homology sphere.
\end{proof}

\begin{rem}\label{rem:planarLinks}
	In the special case $|\cI|=3$, say $\cI=\{i,j,k\}$ with $i<j<k$, the graph $G_\cI$ is a 3-edge coloured bipartite cubic graph. We can define an embedding of $G_\cI$ in an orientable surface by defining the clockwise order of edges to be $(i,j,k)$ around white vertices and $(i,k,j)$ around black vertices. This enables us to view the graph $G_\cI$ as an \emph{embedded} graph, and it is easy to see that it is the dual of the complex $X(G_\cI)$, which is a surface  triangulation. In particular, if $|X(G)|$ is a manifold, then for any $\cI$ with $|\cI|=3$ the complex $X(G_\cI)$ is a disjoint union of spherical triangulations by Proposition~\ref{prop:criteria}, and each connected component of the embedded graph $G_\cI$ is a plane graph -- with the canonical embedding just defined.
\end{rem}

%TODO: discussion about I=3 and the fact that $G_\cI$ is the dual of $X(G_\cI)$ with the particular embedding given by colours.

%	For any cyclic order $\sigma$ of the elements of $\cI$, we consider the rotation system on $G_\cI$ given by $\sigma$ around white vertices and by the inverse of $\sigma$ on black vertices. Interpreting this rotation system as the local clockwise order defines an embedding of the graph $G_\cI$ in some oriented surface. We denote this embedding by $G_\cI^\sigma$, and we remark that if $|\cI|=3$, since there are only two cyclic orders and that they are equal up to reflexion, the embedded graph $G_\cI^\sigma$ is well defined up to reflexion, without having to specify the choice of $\sigma$. In this case, we will just denote this embedded graph by $G_I$.

\subsection{A question about 3-planar colourful graphs}

Let $\cH_d(n)$ be the class of $(d+1)$-colourful graphs on $[1..n]$ having the following property:
	\begin{quote}\centering\it
		(P): For any subset of colours $\cI\subseteq [1..d+1]$ such \\ 
		~ ~ ~ ~~ that $|\cI|=3$, the embedded graph $G_\cI$ is planar. %, where $\sigma$ is the natural order on $\cI$.
			\end{quote}
	From Remark~\ref{rem:planarLinks}, colourful graphs associated to manifolds satisfy property $(P)$, and this fact is important in the proof of our upper bounds.

	We do not expect $H_d(n):=|\cH_n(d)|$ and $M_d(n)$ to have the same superexponential growth, and in fact the proof of our main upper bound uses more than property $(P)$.
However determining the growth of $H_d(n)$ is a purely graph-theoretic question of independent interest. Let
$$\displaystyle
\beta_d := \limsup_{{n\to \infty}} \frac{\log ( H_d(n)/n! )}{n\log n}.
$$
Our proofs show that $\beta_3=\frac{1}{6}$ and that for $d\geq 4$
\begin{align}\label{eq:upperBoundH}
	\frac{1}{6} \leq \beta_d \leq \frac{d-2}{6}.
\end{align}
Determining $\beta_d$ is an interesting problem on its own. Moreover, since $\alpha_d\leq \beta_d$, a substantial improvement of the upper bound on $\beta_d$, would lead to an improvement of our main result. The problem of determining $\beta_d$ seems much more tractable {\it a priori}  than the one of determining $\alpha_d$. On the other hand, we believe that $\alpha_d$ and $\beta_d$ have different asymptotic behaviour, so it is unlikely that one can obtain a tight upper bound on $\alpha_d$ using $\beta_d$, for $d\geq 4$.

%\noindent Let $H_n(d)=|\cH_n(d)|$. The following result is the most general upper bound we prove in this paper:
%\begin{thm}[Main technical result]\label{thm:mainGraphs}
%	For $d,n\geq 2$ we have along even values of $n$:
%	\begin{eqnarray}\label{eq:mainGraphs}
%	%	n^{1+\frac{1}{2d}} \ll
%		H_d(n)\ll n^{\frac{d+4}{6} n} \;.
%	\end{eqnarray}
%\end{thm}

%Given a coloured triangulation $T$ of an orientable manifold, its dual graph $G$ is naturally equipped with a structure of $(d+1)$-colourful graph as we recall in the next section. 
%Each vertex of $G$ represents a simplex of $T$, and each edge in $G$ can be assigned the unique colour that is missing in the $(d-1)$-dimensional simplex that is common to the two simplices of $T$ corresponding to the endpoints of the edge in $G$.
%Classical homology theory (see Lemma~\ref{lem:planar} below) shows that dual graphs of coloured triangulations of manifolds satisfy the property (P) above, therefore all the \emph{upper} bounds claimed before are direct corollaries of Theorem~\ref{thm:mainGraphs}.

\section{Upper bounds}

\subsection{Main lemmas}

We now fix a $(d+1)$-colourful graph $G$ on $[1..n]$. For $\cJ\subseteq [1..d+1]$ we let $\kappa_\cJ$ be the number of connected components of the graph $G_\cJ$. For small values of $|\cJ|$ we drop brackets in the notation, for example $\kappa_{i,j}=\kappa_{\{i,j\}}$.

%GC added first sentence:
Let $\hat{d}$ be a number in $[1..d]$. For $\mathcal{I}\subseteq[{1}..d+1]$ with $|\cI|=\hat{d}+1$ and for $r \in [0..\hat{d}+1]$,  we let 
$$
\kappa^{(r)}_\cI:=\sum_{\cJ\subseteq \cI\atop |\cJ|=r} \kappa_\cJ\;.
$$
Note that $\kappa^{(r)}_{\cI}$ is the number of cells of $X(G_\cI)$ of dimension $(\hat{d}-r)$. In particular, $\kappa^{(0)}_{\cI}=n$ is the number of $\hat{d}$-simplices. Note also that, because $G_\cI$ is $(\hat{d}+1)$-regular, we have $\kappa^{(1)}_{\cI} = \frac{\hat{d}+1}{2}n$.

Moreover, if $\hat{d}$ is even and if $|X(G_\mathcal{I})|$ is a disjoint union of rational homology $\hat{d}$-spheres, the Euler-Poincar\'{e} formula states that
\begin{align}\label{eq:EP}
\sum_{r=0}^{\hat{d}} (-1)^{r} \kappa_\cI^{(r)} = 2\kappa_\cI^{(\hat{d}+1)}\;
\end{align}
%Suppose that $\cI$ is such that $X(G_\cI)$ is a collection of homology $\hat{d}$-spheres, with $|\cI|=\hat{d}+1$. Since $f^{(r)}(X(G_\cI)) = \kappa^{(\hat{d}-r)}_\cI$, for even $\hat{d}$ we have
%\begin{align}\label{eq:EulerPoincare}
%\sum_{i=0}^{\hat{d}} (-1)^i \kappa^{(i)}_\cI = 2\kappa_\cI\;,
%\end{align}

\begin{lemma}\label{lemma:3count}
	Let $\cI \subseteq [1..d+1]$ with $|\cI|=3$ such that $|X(G_\cI)|$ is a disjoint union of $2$-spheres. Then there exist distinct $i,j \in \cI$ such that
	\begin{align}\label{eq:3count}
		\kappa_{i,j}- \kappa_\cI \leq \frac{n}{6}.
	\end{align}
\end{lemma}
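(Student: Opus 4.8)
The plan is to read off the face numbers of the surface triangulation $X(G_\cI)$ from the residue structure of $G$, and then feed them into the Euler--Poincar\'e identity~\eqref{eq:EP}.

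Write $\cI=\{i,j,k\}$ with $i<j<k$, so that $G_\cI$ is a bipartite cubic $3$-edge-coloured graph and $X(G_\cI)$ is a surface triangulation of dimension $\hat d=2$. First I would identify the quantities $\kappa^{(r)}_\cI$ for $r\in\{0,1,2,3\}$ with genuine combinatorial data: $\kappa^{(0)}_\cI=n$ is the number of triangles; $\kappa^{(1)}_\cI=\tfrac32 n$ is the number of edges, using that $G_\cI$ is $3$-regular (as observed just before the lemma); $\kappa^{(2)}_\cI=\kappa_{i,j}+\kappa_{i,k}+\kappa_{j,k}$ is the number of vertices of the triangulation, sorted by colour — a vertex of colour $k$ being a connected component of $G_{\{i,j\}}$, and symmetrically for the other colours, by the residue/component correspondence recalled in Section~\ref{sec:defs}; and $\kappa^{(3)}_\cI=\kappa_\cI$ is the number of connected components of $G_\cI$, i.e. the number of $2$-spheres composing $|X(G_\cI)|$.

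Since $|X(G_\cI)|$ is a disjoint union of $2$-spheres it is in particular a disjoint union of rational homology $2$-spheres, and $\hat d=2$ is even, so~\eqref{eq:EP} applies and reads $\kappa^{(0)}_\cI-\kappa^{(1)}_\cI+\kappa^{(2)}_\cI=2\kappa^{(3)}_\cI$. Substituting the values above gives
\[
\kappa_{i,j}+\kappa_{i,k}+\kappa_{j,k}=2\kappa_\cI+\tfrac{n}{2},
\]
equivalently
\[
(\kappa_{i,j}-\kappa_\cI)+(\kappa_{i,k}-\kappa_\cI)+(\kappa_{j,k}-\kappa_\cI)=\tfrac{n}{2}-\kappa_\cI\le \tfrac{n}{2}.
\]
Each of the three summands on the left is in fact nonnegative, since $G_{\{a,b\}}$ is a subgraph of $G_\cI$ on the same vertex set and hence has at least as many components; in any case the smallest of three reals with sum at most $n/2$ is at most $n/6$, so the pair achieving the minimum provides the desired $i,j$.

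I do not expect a real obstacle here: the only thing to be careful about is the bookkeeping in the second paragraph, namely correctly matching $\kappa^{(1)}_\cI$ with $\tfrac32 n$ and $\kappa^{(2)}_\cI$ with the sum of the three pair-counts, together with the (easy) remark that $2$-spheres qualify for~\eqref{eq:EP}. Once these are in place the conclusion is immediate. Note that the argument only uses that the total Euler characteristic of $X(G_\cI)$ equals $2\kappa_\cI$, which is exactly what~\eqref{eq:EP} provides under Criterion $(iii)$ of Proposition~\ref{prop:criteria}.
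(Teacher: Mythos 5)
Your proposal is correct and follows essentially the same route as the paper: both apply the Euler--Poincar\'e identity~\eqref{eq:EP} with $\hat d=2$ to get $\kappa^{(2)}_\cI=2\kappa_\cI+\tfrac{n}{2}$ and then average over the three colour pairs (the paper bounds the minimum $\kappa_{i,j}$ by $\tfrac{2\kappa_\cI}{3}+\tfrac{n}{6}\le\kappa_\cI+\tfrac{n}{6}$, which is the same pigeonhole step you perform on the differences $\kappa_{i,j}-\kappa_\cI$). Your extra remark that each difference is nonnegative is true but, as you note, not needed.
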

\begin{proof}
The Euler-Poincar\'e formula~\eqref{eq:EP} for $\hat{d}=2$ implies
$$
\kappa^{(0)}_\cI+\kappa^{(2)}_\cI= \kappa^{(1)}_\cI+2\kappa^{(3)}_\cI\;.
$$
Since $\kappa^{(0)}_\cI=n$ and $\kappa^{(1)}_\cI=3n/2$, we obtain
$$
\kappa^{(2)}_\cI = 2\kappa^{(3)}_\cI +\frac{n}{2} =  2\kappa_\cI +\frac{n}{2}\;.
$$
By averaging over pairs of colours in $\cI$, there exist distinct $i,j\in \cI$ such that 
\begin{align}\label{eq:last}
\kappa_{i,j}\leq \frac{2\kappa_\cI}{3} + \frac{n}{6} \leq \kappa_\cI + \frac{n}{6}\;.
\end{align}
\end{proof}

\begin{lemma}\label{lemma:4count}
	Let $\cI \subseteq [1..d+1]$ with $|\cI|=5$ such that $|X(G_\cI)|$ is a disjoint union of rational homology $4$-spheres. Then there exist distinct $i,j,k\in \cI$ such that
	\begin{align}\label{eq:4count}
		\kappa_{i,j}- \kappa_{i,j,k} \leq \frac{3}{20}n.
	\end{align}
\end{lemma}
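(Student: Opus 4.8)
The plan is to follow the template of the proof of Lemma~\ref{lemma:3count}, one dimension higher: first extract a linear relation between the quantities $\kappa^{(r)}_\cI$ from the Euler--Poincar\'e formula, then dispose of the highest ``nuisance'' term by a crude monotonicity bound, and finally conclude by an averaging argument — but now averaging over all flags $\cJ\subset\mathcal{K}\subseteq\cI$ with $|\cJ|=2$, $|\mathcal{K}|=3$, so that the surviving term $\kappa^{(3)}_\cI$ is exactly accounted for by the quantities $\kappa_{i,j,k}$ appearing in~\eqref{eq:4count}.

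First I would apply~\eqref{eq:EP} with $\hat d=4$, which is legitimate since $\hat d$ is even and $|X(G_\cI)|$ is a disjoint union of rational homology $4$-spheres by hypothesis. Using $\kappa^{(0)}_\cI=n$ and $\kappa^{(1)}_\cI=\tfrac52 n$, this gives
$$\kappa^{(2)}_\cI-\kappa^{(3)}_\cI+\kappa^{(4)}_\cI \;=\; 2\kappa_\cI+\tfrac32 n .$$
The only term I need to bound away is $\kappa^{(4)}_\cI=\sum_{\mathcal{L}\subseteq\cI,\,|\mathcal{L}|=4}\kappa_{\mathcal{L}}$, and for this I would use the elementary fact that deleting the edges of one colour cannot decrease the number of connected components, so $\kappa_{\mathcal{L}}\ge\kappa_\cI$ for every $\mathcal{L}\subseteq\cI$. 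Summing over the five such subsets gives $\kappa^{(4)}_\cI\ge 5\kappa_\cI$, hence
$$\kappa^{(2)}_\cI-\kappa^{(3)}_\cI \;=\; 2\kappa_\cI+\tfrac32 n-\kappa^{(4)}_\cI \;\le\; \tfrac32 n - 3\kappa_\cI \;\le\; \tfrac32 n .$$

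Finally I would double count over the $\binom{5}{3}\binom{3}{2}=30$ flags $(\mathcal{K},\cJ)$ with $\cJ\subset\mathcal{K}\subseteq\cI$, $|\cJ|=2$, $|\mathcal{K}|=3$. Each $2$-subset $\cJ$ of $\cI$ lies in exactly $3$ such triples and each $3$-subset $\mathcal{K}$ contains exactly $3$ such pairs, so
$$\sum_{\cJ\subset\mathcal{K}\subseteq\cI}\bigl(\kappa_\cJ-\kappa_{\mathcal{K}}\bigr) \;=\; 3\kappa^{(2)}_\cI-3\kappa^{(3)}_\cI \;\le\; \tfrac92 n .$$
As this is a sum of $30$ terms, at least one of them is at most $\tfrac1{30}\cdot\tfrac92 n=\tfrac3{20}n$; writing the corresponding triple as $\{i,j,k\}$ and pair as $\{i,j\}$ yields~\eqref{eq:4count}.

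I do not anticipate a genuine difficulty here; the two points requiring a little care are checking that the hypotheses of~\eqref{eq:EP} are met verbatim and getting the combinatorial multiplicities in the averaging right. It is worth noting that, contrary to what one might first attempt, the argument never uses that the $4$-colour residues $G_{\mathcal{L}}$ are themselves (homology) spheres — only $G_\cI$ is, which is exactly the stated hypothesis — because the term $\kappa^{(4)}_\cI$ is handled purely by the monotonicity of the component count.
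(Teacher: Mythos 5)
Your proof is correct and follows essentially the same route as the paper: the Euler--Poincar\'e formula for $\hat d=4$, the monotonicity bound on $\kappa^{(4)}_\cI$ to eliminate the top term, and a double-counting/averaging step yielding the constant $\tfrac{3}{20}$. The only cosmetic difference is that you average directly over the $30$ flags $\cJ\subset\mathcal{K}$, whereas the paper first selects a good triple and then takes the smallest of its three pairs; the two arguments are equivalent.
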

\begin{proof}
The Euler-Poincar\'e formula~\eqref{eq:EP} for $\hat{d}=4$ implies
$$
	\kappa^{(0)}_\cI+\kappa^{(2)}_\cI+\kappa^{(4)}_\cI= \kappa^{(1)}_\cI+\kappa^{(3)}_\cI+{2}\kappa^{(5)}_\cI\;, 
$$
	Since by deleting a colour, the number of connected components can only increase, we have $\kappa^{(4)}_\cI \geq 5\kappa^{(5)}_\cI$ and in particular $\kappa^{(4)}_\cI\geq {2}\kappa^{(5)}_\cI$.
	Since $\kappa^{(0)}_\cI=n$ and $\kappa^{(1)}_\cI=5n/2$, we obtain
$$
\kappa^{(2)}_\cI \leq  \kappa^{(3)}_\cI +\frac{3n}{2}\;,
$$
or equivalently,
$$
\frac{1}{3}\sum_{i,j,k\in \cI}(\kappa_{i,j}+\kappa_{j,k}+\kappa_{k,i}) \leq  \sum_{i,j,k\in \cI} \left(\kappa_{i,j,k} +\frac{3n}{20}\right)\;.
$$
Thus, there exists a triple of distinct $i,j,k\in \cI$ satisfying
$$
\kappa_{i,j}+\kappa_{j,k}+\kappa_{k.i} \leq  3\kappa_{i,j,k} +\frac{9n}{20}\;.
$$
	Therefore, up to relabelling $i,j,k$, so that $\kappa_{i,j}$ is the smallest term in the left-hand side, we obtain
$$
~~~~~~~~~~~~~~~ 
~~~~~~~~~~~~~~~ 
\kappa_{i,j} \leq  \kappa_{i,j,k} +\frac{3n}{20}\;.
~~~~~~~~~~~~~~~ 
~~~~~~~~~~~~~~~ 
\qedhere
$$
\end{proof}

\begin{rem}It is natural to expect that by using the Euler-Poincar\'e formula for rational homology spheres of higher dimensions one could obtain variants of Lemmas~\ref{lemma:3count} and~\ref{lemma:4count} with gradual improvements of the constants $\frac{1}{6}, \frac{3}{20}$ as the dimension gets higher. However this does not seem to be the case -- at least not without new ideas. Similarly, we have not been able to obtain any improvement by looking at the whole set of Dehn-Sommerville equations rather than only the Euler-Poincar\'e formula, in any dimension.
\end{rem}

\begin{lemma}\label{lemma:treecount}
	Let $C$ be a $2$-colourful graph on $[1..n]$ with $\kappa_{1,2}=c$ connected components. Then the number of $3$-colourful planar graphs $G$ on $[1..n]$ with $k$ components such that $G_{\{1,2\}}=C$
%GC changed phrasing:
	is at most $2^{5n} n^{c-k}$.
	%, where the constant in the big O does not depend on $c$ nor $k$.
\end{lemma}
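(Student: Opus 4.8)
The plan is to reconstruct a $3$-colourful planar graph $G$ extending a fixed $2$-colourful graph $C$ by specifying, component by component, how the colour-$3$ edges are placed and how they ``link up'' the components of $C$. First I would fix notation: write $\cI=\{1,2\}$, so $G_\cI=C$ has $c$ components, and each colour-$3$ edge of $G$ joins a white vertex to a black vertex (bipartiteness), so there are exactly $n/2$ colour-$3$ edges. The key structural input is that for the colour set $\{1,2,3\}$ the graph $G$ itself is planar as an embedded graph in the sense of Remark~\ref{rem:planarLinks}: each of its $k$ components is a plane graph whose faces are the bicoloured cycles in colours $\{1,2\},\{1,3\},\{2,3\}$. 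In particular the $\{1,2\}$-coloured components of $C$ are precisely the $\{1,2\}$-faces of this embedding, and the colour-$3$ edges, together with a chosen $\{1,2\}$-alternating walk structure, glue these faces into surfaces which must be spheres.

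Next I would set up the encoding. Root each component of $G$ at a distinguished vertex (this costs a factor at most $n^k$ if we root at a labelled vertex, but in fact we can be cheaper). Traverse the plane graph by a canonical exploration (e.g. a DFS of the dual, or a contour exploration of a spanning tree of $G$): since each component is planar with all vertices of degree $3$, a standard encoding of a rooted planar cubic map on $v$ vertices uses $O(v)$ bits, i.e. a factor $2^{O(n)}$ overall. The content we still have to record is: (a) which $\{1,2\}$-component of $C$ each explored corner belongs to — but this is forced once $C$ is fixed and the embedding is being built, so it costs nothing extra; and (b) the labels. Here is the crucial counting point: we are only choosing how the $c$ components of $C$ are assembled into $k$ components of $G$ using colour-$3$ edges, and a planar (tree-like up to the $2^{O(n)}$ slack) assembly of $c$ pieces into $k$ connected blocks is described by choosing, for all but $k$ of the pieces, ``where it attaches'', which is a choice among at most $n$ possibilities each — giving the factor $n^{c-k}$. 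Making (b) precise: when the exploration first enters a new $\{1,2\}$-component of $C$ via a colour-$3$ edge, the endpoint in the new component can be any of its (at most $n$) vertices, and this happens exactly $c-k$ times (each of the $c$ components is entered once, except the $k$ roots); every other colour-$3$ edge goes to a vertex that is already determined, up to a bounded local choice absorbed into $2^{O(n)}$.

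I would then assemble the bound: the number of ways to choose the planar combinatorial structure on each component is $2^{O(n)}$ by the cubic-map encoding; the number of ways to distribute the $c$ components among $k$ blocks and pick the $c-k$ ``entry vertices'' is at most $n^{c-k}$; the number of ways to fix the rest of the labelling given this data is again $2^{O(n)}$ (each remaining vertex's label is pinned down by the exploration up to $O(1)$ choices, since within a fixed $\{1,2\}$-component the vertex set and its cyclic $C$-structure are already known). Tracking the constants carefully — each colour-$3$ edge contributes a bounded number of binary choices, each component a bounded-rate planar code, plus the bipartite orientation data — gives a total of at most $2^{5n} n^{c-k}$, as claimed. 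The main obstacle, and the step needing the most care, is item (b): showing that \emph{apart from} the $c-k$ entry vertices every other endpoint of a colour-$3$ edge is determined by the already-revealed data up to $O(1)$ choices. This is where planarity is essential — in a plane graph the cyclic order of edges around each vertex is fixed, so once we know which corner we are exploring and which $\{1,2\}$-component lies across a colour-$3$ edge, the landing corner (hence vertex) is forced by following the boundary of the already-built region; without the planarity hypothesis (P) the number of such choices would not be bounded and the factor $n^{c-k}$ would blow up.
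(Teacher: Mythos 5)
Your proposal is correct and takes essentially the same approach as the paper: an encoding based on planarity in which the $c-k$ colour-$3$ edges that merge the cycles of $C$ into the $k$ components of $G$ each cost a factor of $n$, while all remaining colour-$3$ edges form non-crossing arch systems (Catalan-type structures) contributing only $2^{O(n)}$. The only cosmetic difference is in the bookkeeping: the paper extracts the $n^{c-k}$ from labelling a plane forest on the $c$ cycles and bounds the attachment vertices separately by $\prod_{i}\ell_i\leq 2^n$, whereas you charge the factor $n$ directly to each of the $c-k$ first-entry vertices (and, as you note, one should root components canonically, e.g.\ at their minimum label, to avoid an $n^k$ overhead).
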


\begin{proof}
We will bound the number of graphs $G$ satisfying the required properties by showing how to construct such graphs in two steps.
	In Step 1, we bound the number of ways to construct a minimal subgraph $H$ of $G$ that contains $C$ and has the same connected components as $G$ (hence $k$ connected components). Then in Step 2, we bound the number of ways to extend $H$ to $G$, preserving planarity.
We let $\ell_1,\dots, \ell_{c}$ be the lengths of the $\{1,2\}$-cycles in $C$. 
	%Planarity of $G$ implies that $|X(G)|$ is a collection of $k$ $2$-spheres.

%By the definition of $\cH^{d+1}_d(n)$, we have that any graph $G\in \cH^{d+1}_d(n)$ satisfies $c=c_{1,2}(G)\leq \frac{n}{6}+\frac{2}{3}\kappa(G_\cI)$. 

%Thus, for every $\kappa \in [\frac{3}{2}c -\frac{n}{4}, n]$, define
%$$
%\cH^{d+1}_d(n,G^0,\kappa):=\{G\in \cH^{d+1}_d(n,G^0): \kappa(G_\cI)=\kappa\}\;.
%$$

	{\it Step 1.}
	The subgraph $H$ consists of $C$ together with $k-c$ extra edges that connect components of $C$ together. 
	We encode  $H$ using a labelled plane forest $F$ on $[1..c]$ with $k$ components, a binary string $w_1$ of length $n$ and a string $w_2\in [1..\ell_1]\times\dots\times [1..\ell_c]$. The vertices of the forest correspond to the cycles of $C$, say ordered by increasing minimum vertex, and the edges of the forest determine which cycles are connected together with edges of colour $3$ in $H$.
	
	We use $w_1$ and $w_2$ to specify the attachment of the edges of colour $3$ between cycles as follows. 
	%Fix an arbitrary ordering of the cycles of $C$, which induce a root in each connected component of $F$. 
	We explore $F$ component by component using a clockwise Depth-First-Search (DFS), using the minimum vertex yet unexplored as root for each new component. At the same time, we add edges of colour $3$ to $C$ as follows. The word $w_1$ indicates which vertices of $[1..n]$ are adjacent to an edge of colour $3$ in $H$. The word $w_2$ specifies the vertex to which an edge of colour $3$ connects the first time a cycle is visited in the DFS exploration.
 Note that once we have attached the first edge of colour $3$ to a cycle, this fixes the order in which the other edges of colour $3$ appear along the same cycle. 

Clearly, given the choice of a plane forest and two words, there is at most one way to connect the cycles in $C$ with edges of colour $3$ that is compatible with them.  

To bound from above the number of ways to construct $H$, we bound the total number of encodings. There are at most $2^n$ choices for $w_1$ and $\prod_{i=1}^c \ell_i$ choices for $w_2$. From the arithmetic-geometric mean inequality and since $\sum_{j=1}^{c} \ell_j = n$, it follows that 
$$
\prod_{i=1}^c \ell_i \leq \left(\frac{\sum_{j=1}^c \ell_j}{c}\right)^c = \left(\frac{n}{c}\right)^c
\leq \binom{n}{c} \leq 2^n\;.
%	\leq \left\lceil\frac{n}{c}\right\rceil^c
%	\leq {c\left\lceil\frac{n}{c}\right\rceil \choose c}
%	\leq 2^{n+c-1} = 2^{O(n)}\;.
$$
	Moreover, the number of labelled plane forests with $m$ vertices and $k$ components is at most $\frac{m!}{k!} {2m-k \choose m}$. Indeed, an unlabelled plane forest with $k$ ordered components and $m$ vertices can be encoded by its \L ukasiewicz path (see e.g.~\cite[Chap.~5]{Stanley:EC2}) which is a path with $m-k$ up-steps and $m$ down steps.  The number of inequivalent ways to label the vertices is at most $m!$, and once vertices are labelled all the $k!$ possible orderings of components are inequivalent.

	Since $m!/k!=m(m-1)\dots (k+1) \leq m^{m-k}$, 
this shows that the number of choices for $H$ is at most $2^{3n}n^{c-k}$ and concludes the discussion on Step~1.

%	There are $\binom{m}{k}m^{m-k}$ plane forests with $m$ elements and $k$ components~\cite{?}.
%Therefore, the number of choices for $H$ is at most $2^{O(n)}n^{c-k}$.

	{\it Step 2.}
Given the choice of $H$, we now bound the number of ways to extend it to a $3$-colourful planar graph $G$ with the same number of components. We first add a new half-edge to each vertex of $H$ not already incident to an edge of colour $3$. The edges of ${G-H}$ can be seen as a perfect matching on these half-edges.
Now, because $G$ is planar and has the same number of components as $H$, the edges of this perfect matching form a non-crossing arch system around each face of $H$ (as $H$ is a 3-edge-coloured cubic graph, by Remark $1$ it has a canonical embedding and its faces are well-defined). The matching of these half-edges  can therefore be encoded by a well-formed parenthesis word of length equal to the number of half-edges.

	This shows that the number of ways to construct $G$ from $H$ is at most  $2^{2n}$.

	\smallskip 
The lemma follows from the bounds obtained in Step 1 and Step 2.
\end{proof}

\subsection{Induction on dimension and proof of upper bounds in Theorem~\ref{thm:main}}\label{sec:UB}

The upper bounds in Theorem~\ref{thm:main}  easily follow from the implication $(ii)\Rightarrow (iii)$ in Proposition~\ref{prop:criteria} together with Lemmas \ref{lemma:3count}, \ref{lemma:4count} and \ref{lemma:treecount}, as we now show. In fact, we are going to show a more general result.
\begin{definition}
	For $d\geq 2, n\geq 2$, we let $\mathcal{N}_d(n)$ be the set of $(d+1)$-colourful graphs on $[1..n]$ having the property that for each $\cI\subseteq [1..d]$ with $|\cI|\in \{3,5\}$ and $|\cI|\leq d$, the space $|X(G_\cI)|$ is a disjoint union of rational homology spheres.
	
	We let $\mathcal{S}_d(n)$ be the set of graphs such that $|X(G)|$ is a manifold \emph{and} a rational homology sphere. 
\end{definition}

	We let $N_d(n)=|\mathcal{N}_d(n)|$ and $S_d(n)=|\mathcal{S}_d(n)|$. As $M_d(n)\leq N_d(n)$, the upper bounds in Theorem~\ref{thm:main} follow directly from the next result.

\begin{thm}\label{thm:upperBounds}
	 For all $d\geq 3$ with $d\neq 4$ we have  
		\begin{align}\label{eq:upperBoundN}
			\frac{1}{n!}N_d(n)  \ll n^{\left(\frac{1}{6}+(d-3) \frac{3}{20}\right) n},
		\end{align}
	and for $d=4$ we have 
	\begin{align}\label{eq:upperboundN4}
		\frac{1}{n!}N_4(n)  \ll n^{\frac{n}{3} }.
	\end{align}
		Moreover for any $d\geq 3$ we have	
		\begin{align}\label{eq:upperBoundS}
			\frac{1}{n!}S_d(n)  \ll n^{\left(\frac{1}{6}+(d-3) \frac{3}{20}\right) n}.
		\end{align}
\end{thm}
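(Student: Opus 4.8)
The plan is to reconstruct any graph $G$ in $\mathcal{N}_d(n)$ or $\mathcal{S}_d(n)$ by \emph{peeling off one colour at a time}, passing from the full $(d+1)$-colourful graph down to a $3$-colourful residue, at which point planarity (Remark~\ref{rem:planarLinks}) forces the count to be merely exponential. Two preliminary facts frame the process. First (the endpoint): the number of $3$-colourful graphs on $[1..n]$ whose canonical embedding is planar is $\preceq n!$, since such a graph is dual to a coloured triangulation of a disjoint union of $2$-spheres with $n$ triangles, and up to relabelling there are only $C^n$ of those (Tutte-type enumeration of planar maps), hence at most $n!\,C^n$ labelled ones. Second (the peeling step): if $H'$ is obtained from a colourful graph $H$ by deleting a colour $c$, if $\{i,j\}$ is a pair of colours still present in $H'$, and if the $\{i,j,c\}$-residue of $H$ is planar, then Lemma~\ref{lemma:treecount} — applied with $C=H'_{\{i,j\}}$ and summed over the possible number of components of that residue, which costs only a polynomial factor absorbed by $\preceq$ — bounds the number of admissible colour-$c$ matchings $E_c$, given $H'$, by $2^{O(n)}\,n^{\kappa_{i,j}-\kappa_{i,j,c}}$.

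The core of the argument is the rule selecting the colour to peel at each step. Fix $G$; for $\mathcal{S}_d$ we first invoke Proposition~\ref{prop:criteria}~$(ii)\Rightarrow(iii)$ to get that every residue $G_\cI$ with $1\le|\cI|\le d$ is a disjoint union of rational homology spheres (for $\mathcal{N}_d$ this is already part of the definition when $|\cI|\in\{3,5\}$), so in particular every $3$-coloured residue is a disjoint union of $2$-spheres and hence planar. Starting from $G^{(0)}=G$, as long as the current graph has at least $5$ colours and admits a $5$-coloured residue that is a disjoint union of rational homology $4$-spheres, I apply Lemma~\ref{lemma:4count} to such a residue to obtain colours $i,j,k$ with $\kappa_{i,j}-\kappa_{i,j,k}\le\tfrac{3}{20}n$, and I peel the colour $k$, feeding the peeling step the pair $\{i,j\}$ and the planar residue on $\{i,j,k\}$; this regime applies, for $\mathcal{N}_d$ and $\mathcal{S}_d$ with $d\ge 5$, to every peel until $4$ colours remain, and also to the first peel when $d=4$ and $G\in\mathcal{S}_4(n)$ (there $G$ itself is a $5$-colourful graph that is a rational homology $4$-sphere, by definition of $\mathcal{S}_4$). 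When only $4$ colours remain — or from the outset, for $\mathcal{N}_3$, $\mathcal{N}_4$ and $\mathcal{S}_3$, where no suitable $5$-residue is available — I instead apply Lemma~\ref{lemma:3count} to a $3$-coloured residue, obtaining $\kappa_{i,j}-\kappa_{i,j,k}\le\tfrac n6$, and peel $k$ the same way. I stop when $3$ colours remain: the resulting $3$-colourful graph is a residue of $G$, hence planar, and is counted by the first preliminary fact.

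To assemble the bound, note that each $G$ gives rise in this way to a ``peeling record'' — the ordered data of the peeled colours, the pairs used, and the lemma applied at each step — of which, $d$ being fixed, there are only $O_d(1)$; summing over records and over the terminal planar $3$-colourful graph ($\preceq n!$ choices) and multiplying the per-peel factors from the peeling step gives $N_d(n),\,S_d(n)\preceq n!\cdot n^{\sum_t\epsilon_t}$ with $\epsilon_t\le\tfrac{3}{20}n$ for a Lemma~\ref{lemma:4count} peel and $\epsilon_t\le\tfrac n6$ for a Lemma~\ref{lemma:3count} peel. There are $d-2$ peels in total: exactly one uses Lemma~\ref{lemma:3count}, except that $\mathcal{N}_4$ uses two, and the remaining peels use Lemma~\ref{lemma:4count} — namely $d-3$ of them for $\mathcal{N}_d$ and $\mathcal{S}_d$ with $d\ge5$ and for $\mathcal{S}_4$, and none for $\mathcal{N}_3$, $\mathcal{N}_4$, $\mathcal{S}_3$. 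Substituting yields exactly the exponents of \eqref{eq:upperBoundN}, \eqref{eq:upperboundN4} and \eqref{eq:upperBoundS}, and the bounds on $M_d(n)$ in Theorem~\ref{thm:main} follow since $M_d(n)\le N_d(n)$.

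The step I expect to require the most care is checking that the residue hypotheses needed for the next peel survive the current one: this is immediate because a residue of a residue is a residue, so all the rational-homology-sphere conditions pass down to each $G^{(t)}$ regardless of which colour was removed. The one genuinely load-bearing design decision is to halt the peeling at $3$ colours rather than $2$: peeling down to a $2$-colourful residue would force another use of Lemma~\ref{lemma:treecount} and reintroduce a spurious factor $n^{n/6}$, whereas planar maps on $n$ vertices are honestly only exponentially many — which is precisely what keeps the final exponent at the claimed value.
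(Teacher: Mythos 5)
Your proposal is correct and is essentially the paper's own argument: the paper packages the colour-by-colour peeling as an induction on $d$ (Lemma~\ref{lemma:inductionUpperBound}, instantiated with $\mathcal{N}_d$, $\mathcal{S}_4$ and $\mathcal{H}_d$), with the same base case of planar $3$-colourful graphs, the same use of Proposition~\ref{prop:criteria} and Lemmas~\ref{lemma:3count}, \ref{lemma:4count} and~\ref{lemma:treecount}, and the same accounting of one $\tfrac{n}{6}$ step plus $d-3$ steps of $\tfrac{3n}{20}$ (two $\tfrac n6$ steps for $\mathcal{N}_4$). Your explicit ``peeling record'' and the summation over the number of components in the treecount step are just more careful versions of bookkeeping the paper leaves implicit.
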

%Note that the upper bounds in Theorem~\ref{thm:main} follow from this theorem since $M_d(n)\leq N_d(n)$ by Proposition~\ref{prop:criteria}. 

We first state a lemma.
\begin{lemma}\label{lemma:inductionUpperBound}
Suppose we are given, for each $d\geq 2$ and $n\geq 2$, a family $\mathcal{A}_d(n)$ of $(d+1)$-colourful graphs on $[1..n]$  such that:
	\begin{itemize}
		\item[(a)] For any $d\geq 3$, any $G\in \mathcal{A}_d(n)$, and any colour $i\in [1..d+1]$, the $d$-colourful graph obtained from $G$ by removing all edges of colour $i$ belongs to $\mathcal{A}_{d-1}(n)$;
		\item[(b)] For each $\cI\subseteq [1..d+1]$ with $|\cI|=3$, the embedded graph $G_\cI$ is plane;
		\item[(c)] There exists a sequence $(a_d)_{d \geq 3}$ such that for any $d\geq 3$ and any $G\in \mathcal{A}_d(n)$, there exists a triple of distinct colours $i,j,k\in [1..d+1]$ such that $\kappa_{i,j}-\kappa_{i,j,k} \leq a_d n$.
	\end{itemize}
	Then we have
	$$\frac{1}{n!} |\mathcal{A}_d(n)| \ll n^{(a_3+a_4+\dots+a_d)n}$$
\end{lemma}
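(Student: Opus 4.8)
The plan is to prove Lemma~\ref{lemma:inductionUpperBound} by induction on $d$, using Lemma~\ref{lemma:treecount} as the engine for the inductive step. First I would set up the base case: for $d=2$, every $3$-colourful graph on $[1..n]$ trivially satisfies $\frac{1}{n!}|\mathcal{A}_2(n)| \ll n^0 = 1$, since a $3$-colourful graph is determined by a $3$-regular bipartite multigraph on $n$ labelled vertices with an edge $3$-colouring, and up to relabelling there are only exponentially many such objects (for instance, each colour class is a perfect matching, and there are $(n/2)!$ perfect matchings on a labelled set, so the total count is at most $((n/2)!)^3 \cdot \text{poly} \ll n^{0}\cdot n!$ after dividing by $n!$; more carefully one uses that specifying the three matchings up to relabelling costs only $2^{O(n)}$ once one vertex-labelling is fixed). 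The empty sum $a_3 + \dots + a_2$ is zero, so this matches the claimed bound.

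For the inductive step, fix $d \geq 3$ and assume the bound holds for $d-1$. Given $G \in \mathcal{A}_d(n)$, property (c) furnishes a triple $i,j,k$ with $\kappa_{i,j} - \kappa_{i,j,k} \leq a_d n$. I would first choose this triple (at most $\binom{d+1}{3} = O(1)$ choices, an exponential factor we may absorb), and then reconstruct $G$ from the following data: the $d$-colourful graph $G'$ obtained by deleting all edges of colour $k$, together with the colour-$k$ edges. By property (a), $G'$ lies in $\mathcal{A}_{d-1}(n)$ after relabelling colours, so by the induction hypothesis the number of choices for $G'$ is at most $2^{O(n)} n! \, n^{(a_3+\dots+a_{d-1})n}$. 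It remains to bound, given $G'$, the number of ways to add back the colour-$k$ edges. Here is where Lemma~\ref{lemma:treecount} enters: the colour-$k$ edges together with the colours $i,j$ of $G'$ form a $3$-colourful graph $G_{\{i,j,k\}}$, which by property (b) is planar (embedded), has $\kappa_{i,j,k}$ components, and restricts to the fixed $2$-colourful graph $C := G_{\{i,j\}}$ with $c = \kappa_{i,j}$ components. Lemma~\ref{lemma:treecount} bounds the number of such $G_{\{i,j,k\}}$ by $2^{5n} n^{\kappa_{i,j} - \kappa_{i,j,k}} \leq 2^{5n} n^{a_d n}$. Since specifying the colour-$k$ edges is the same as specifying $G_{\{i,j,k\}}$ given that its colour-$i$ and colour-$j$ parts are already known from $G'$, this is exactly the count we need.

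Multiplying the two bounds gives
$$
|\mathcal{A}_d(n)| \;\ll\; n! \, n^{(a_3+\dots+a_{d-1})n} \cdot n^{a_d n} \;=\; n! \, n^{(a_3+\dots+a_d)n},
$$
which is the claim after dividing by $n!$ and absorbing all the $2^{O(n)}$ and $O(1)$ factors into the $\ll$ relation. The main obstacle, and the point requiring care, is the bookkeeping in the inductive step: one must check that after deleting colour $k$ and relabelling the remaining $d$ colours in their natural order, property (c) still holds for $G'$ with the correctly-indexed constant (it does, because the triple guaranteed by (c) for $\mathcal{A}_{d-1}$ uses $a_{d-1}$, which is the next term consumed in the sum), and that properties (a) and (b) are genuinely inherited by $G'$ — (a) is immediate by a second application, and (b) for $G'$ is a special case of (b) for $G$ since triples of colours in $[1..d+1]\setminus\{k\}$ are in particular triples in $[1..d+1]$. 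A secondary subtlety is ensuring the reconstruction map (from $(G', \text{colour-}k\text{ edges})$ back to $G$) is injective, which it is by construction since $G$ is literally the union of $G'$ and its colour-$k$ edges.
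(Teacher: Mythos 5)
Your inductive step is correct and is essentially the paper's argument: delete colour $k$ from the triple given by (c), apply the induction hypothesis to $G_{[1..d+1]\setminus\{k\}}$ via property (a), and reinsert the colour-$k$ edges using Lemma~\ref{lemma:treecount} applied to the plane graph $G_{\{i,j,k\}}$, paying $2^{O(n)}n^{\kappa_{i,j}-\kappa_{i,j,k}}\leq 2^{O(n)}n^{a_dn}$. Your remarks on injectivity of the reconstruction and on the inheritance of (a)--(c) are fine.

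The base case, however, contains a genuine error. You claim that \emph{every} $3$-colourful graph family satisfies $\frac{1}{n!}|\mathcal{A}_2(n)|\ll 1$, supported by the estimate $((n/2)!)^3\cdot\mathrm{poly}\ll n!$. That inequality is false: by Stirling, $((n/2)!)^3 = e^{O(n)}n^{3n/2}$ while $n! = e^{O(n)}n^{n}$, so the set of \emph{all} $3$-colourful graphs on $[1..n]$ (three independent perfect matchings between the two sides of the bipartition) has size $\asymp n^{n/2}\,n!$, not $\ll n!$. The parenthetical claim that ``specifying the three matchings up to relabelling costs only $2^{O(n)}$'' is likewise false: even after normalising one matching by relabelling, the remaining two contribute a superexponential factor. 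If your base case were as stated, it would propagate a spurious $n^{n/2}$ through the induction and destroy the theorem (e.g.\ it would give $\alpha_3\leq 2/3$ rather than $1/6$). The correct base case is not trivial and must use property (b): for $d=2$ the single set $\cI=[1..3]$ makes $G$ itself a plane cubic multigraph, and the number of unlabelled embedded planar cubic multigraphs with $3n/2$ edges is at most $K^n$ (by a spanning-tree encoding or Tutte's formulas), so after labelling the $n$ vertices and colouring edges one gets $|\mathcal{A}_2(n)|\leq n!\,K'^n$, i.e.\ $\frac{1}{n!}|\mathcal{A}_2(n)|\ll 1$. With this repair your proof coincides with the paper's.
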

\begin{proof}[Proof of Lemma~\ref{lemma:inductionUpperBound}]
	The proof proceeds by induction on $d\geq 2$. 
	
	For $d=2$, Property (b) implies that $G$ is planar.
	%GC merged footnote with text. Is "recall" good?
	Now, the number of planar cubic multigraphs with $n$ labelled vertices is at most $n!K^n$ for some constant $K$. Indeed, the number of unlabelled embedded planar multigraphs with $3n$ edges is bounded by $K^n$ for some $K>0$, which follows either by a spanning tree argument or by exact counting formulas~\cite{Tutte} as recalled in the introduction. Here since we have $n$ vertices, the labelling multiplies by at most $n!$. Moreover for each graph, there is an exponential number of colourings of the edges using $3$ colours. It follows that $|\mathcal{A}_2(n)|\ll n^n$.

	Let $d\geq 3$, and let $G\in \mathcal{A}_d(n)$. Take any triple of colours $i,j,k$ as in Property (c). The graph $G$ is the union of the graph $G_{[1..d+1]\setminus\{k\}}$ and the graph $G_{\{i,j,k\}}$. By induction and Property (a), there are at most $n^{(a_3+a_4+\dots+a_{d-1})n}$ choices for the first graph. By Property (b), the second graph is planar, so by Lemma~\ref{lemma:treecount} there at most $n^{a_d n}$ ways to choose it once the edges of colour $i$ and $j$ have been placed.
\end{proof}

\begin{proof}[Proof of Theorem~\ref{thm:upperBounds} and of the upper bound in~\eqref{eq:upperBoundH}]
	We apply Lemma~\ref{lemma:inductionUpperBound} with several different sequences $\mathcal{A}_d(n)$ depending on which bound we want to obtain.

	We first choose $\mathcal{A}_d(n) = \mathcal{N}_d(n)$ for $d\neq 4$ and $\mathcal{A}_4(n)=\mathcal{S}_4(n)$. This sequence satisfies Property (a) and (b) by definition. We can then take $a_3=\frac{1}{6}$ by Lemma~\ref{lemma:3count}, and $a_d=\frac{3}{20}$ for $d\geq 4$ by Lemma~\ref{lemma:4count}. This implies both~\eqref{eq:upperBoundN} and the case $d=4$ of~\eqref{eq:upperBoundS}.
		
	If we choose $\mathcal{A}_d(n)=\mathcal{N}_d(n)$ for all $d$, we can take $a_3=a_4=\frac{1}{6}$ and $a_d=\frac{3}{20}$ for $d\geq 5$, which gives a worse bound in general but is the best we can do for $d=4$, proving~\eqref{eq:upperboundN4}.

	If we choose $\mathcal{A}_d(n)=\mathcal{H}_d(n)$ for all $d$, then by Lemma~\ref{lemma:3count} we can take $a_d=\frac{1}{6}$ for all $d$, hence proving~\eqref{eq:upperBoundH}.

	It only remains to prove~\eqref{eq:upperBoundS} for $d\neq 4$. But this is a direct consequence of~\eqref{eq:upperBoundN} since $\mathcal{S}_d(n)\subseteq \mathcal{N}_d(n)$.

	\end{proof}

\section{Lower bounds}

\label{sec:lowerBounds}

\begin{figure}
	\centering
		\includegraphics[width=0.2\linewidth]{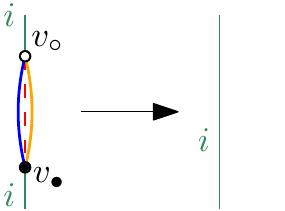}
		\caption{The operation of dipole removal.}\label{fig:dipoleRemoval}
\end{figure}

\begin{figure}
	\centering
		\includegraphics[width=\linewidth]{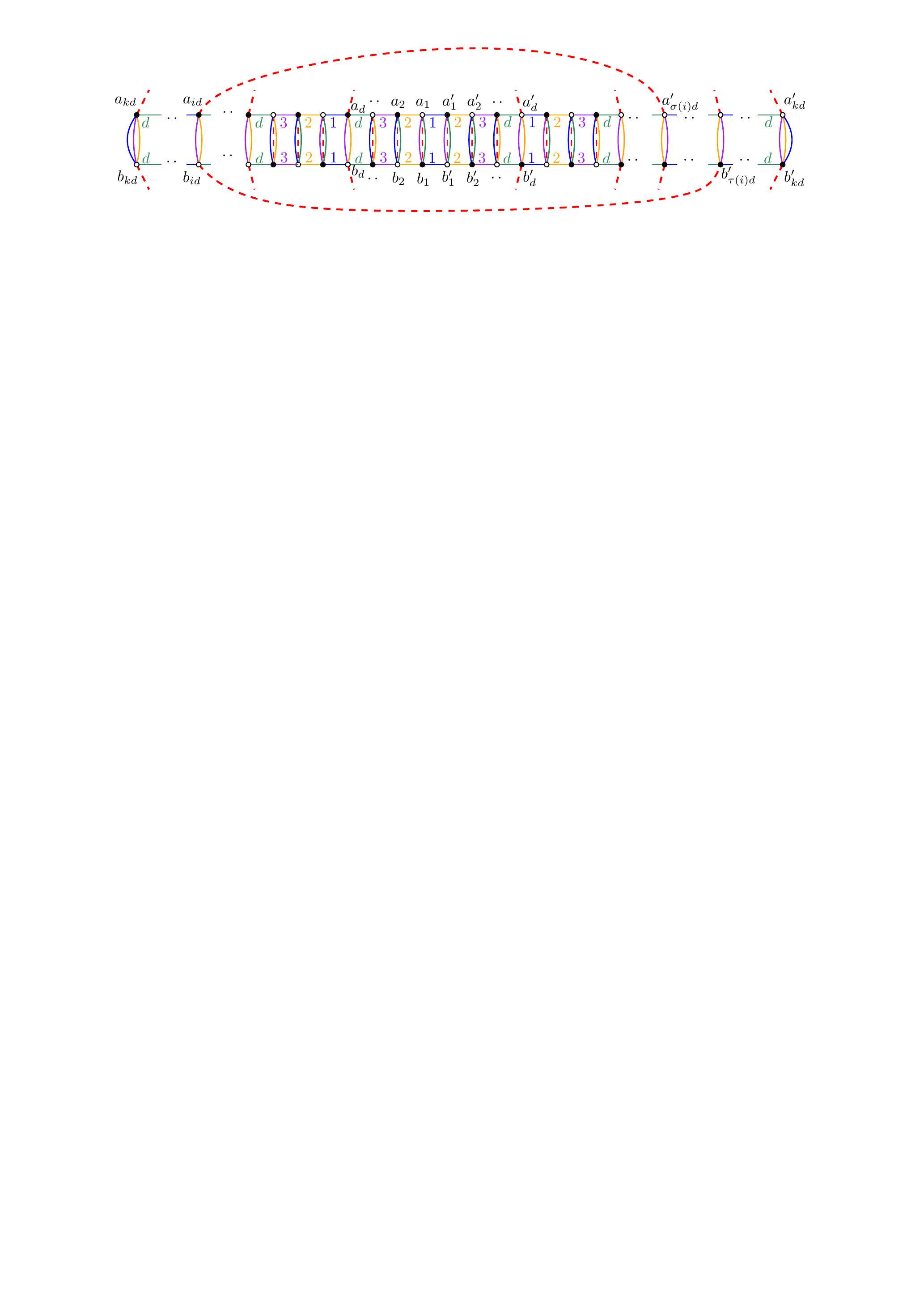}
		\caption{The lower bound construction, pictured here for $d=4$. The colour in $[1..d]$ of horizontal edges is indicated, and edges of colour $d+1$ are represented with dashed lines.}\label{fig:lowerBounddmanifolds}
\end{figure}

The goal of this section is to construct many triangulated $d$-dimensional manifolds.
We will use the implication $(i)\Rightarrow (ii)$ in Proposition~\ref{prop:criteria} to ensure that the complexes we construct are manifolds. In order to ensure that the residues are spheres, we use the criterion in the lemma below.
We first define the operation of \emph{dipole removal}, which takes as input a $(d+1)$-colourful graph $G$ and two vertices $v_\circ$ and $v_\bullet$ of $G$ that are linked together by exactly $d$ edges, using all colours in $[1..d+1]$ except some colour $i$.
The output of the dipole removal is the graph obtained from $G$ by removing the vertices $v_\circ$ and $v_\bullet$ and all the edges between them, and merging the two edges of colour $i$ incident to them into a new edge. See Figure~\ref{fig:dipoleRemoval}. The following lemma is easy and classical, see e.g.~\cite{Bonzom++}.
\begin{lemma}\label{lemma:dipoles}
	Let $d\geq 2$ and let $G$ be a connected $(d+1)$-colourful graph that can be reduced to a single dipole (the 2-vertex $(d+1)$-colourful graph) by a sequence of dipole removals. Then $|X(G)|$ is homeomorphic to a $d$-sphere. 
\end{lemma}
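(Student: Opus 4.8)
The plan is to induct on the number $n$ of vertices of $G$, using the fact that a single dipole removal changes the topology of $|X(G)|$ in a controlled way. The base case $n=2$ is the statement that the $2$-vertex $(d+1)$-colourful graph (a single dipole) gives $|X(G)| \cong \mathbb{S}^d$; this is immediate since the complex is obtained by gluing two $d$-simplices along all $d+1$ of their facets by colour-preserving isometries, which is the standard boundary-of-a-$(d+1)$-simplex type picture, or more directly two $d$-balls glued along their boundary $(d-1)$-spheres. For the inductive step, suppose $G$ reduces to a dipole by a sequence of removals; let $G'$ be the graph obtained after the first removal, so $G'$ is a connected $(d+1)$-colourful graph on $n-2$ vertices which also reduces to a dipole. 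By induction $|X(G')| \cong \mathbb{S}^d$, so it suffices to prove the following local claim.

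\textbf{Local claim:} If $G'$ is obtained from a connected $(d+1)$-colourful graph $G$ by removing a dipole on vertices $v_\circ, v_\bullet$ joined by edges of all colours except $i$, then $|X(G)| \cong |X(G')|$.

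To prove the local claim I would argue directly on the cell complexes. The two simplices $\sigma_\circ$ and $\sigma_\bullet$ corresponding to $v_\circ$ and $v_\bullet$ are glued to each other along the $d$ facets whose missing colours lie in $[1..d+1]\setminus\{i\}$; equivalently, they are glued along everything except the two facets $F_\circ \subseteq \sigma_\circ$ and $F_\bullet \subseteq \sigma_\bullet$ of colour-type "missing $i$''. The union $\sigma_\circ \cup \sigma_\bullet$ inside $|X(G)|$ is therefore a $d$-ball $B$ whose boundary $\partial B$ is precisely $F_\circ \cup F_\bullet$ glued along their common boundary — a $(d-1)$-sphere — and $B$ meets the rest of the complex only along $\partial B$. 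Concretely one checks that $\sigma_\circ\cup\sigma_\bullet$ deformation retracts onto the subcomplex $F_\circ\cup F_\bullet$ (e.g.\ the two simplices with $d$ of their facets identified collapse onto the remaining "bent'' pair of facets), and this collapse is exactly the combinatorial move that merges the two colour-$i$ edges incident to $v_\circ,v_\bullet$ into a single colour-$i$ edge of $G'$ and deletes $\sigma_\circ,\sigma_\bullet$. Since $B$ is a $d$-ball glued to the closure of its complement along the sphere $\partial B$, collapsing $B$ onto $\partial B$ is a homeomorphism of the quotient spaces; hence $|X(G)| \cong |X(G')|$, proving the claim and, with the induction, the lemma.

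The only genuinely delicate point is verifying that the pair of facets $F_\circ$, $F_\bullet$ really does bound an embedded $d$-ball meeting the rest of $|X(G)|$ only along its boundary sphere — i.e.\ that no further identifications among the faces of $\sigma_\circ \cup \sigma_\bullet$ are forced by other edges of $G$. This is where connectivity and the colourful structure are used: the $d$ edges between $v_\circ$ and $v_\bullet$ account for \emph{all} facets of $\sigma_\circ$ and $\sigma_\bullet$ except $F_\circ, F_\bullet$, so any extra incidence would have to come through lower-dimensional faces, and one checks these are already accounted for by the boundary identification within $F_\circ\cup F_\bullet$. Rather than belabour this, I would simply invoke that the computation is local and classical (as the statement notes, see \cite{Bonzom++}), and present the ball-collapse argument as the conceptual core. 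I expect this gluing-is-a-ball verification to be the main obstacle to writing a fully rigorous proof, but it is routine.
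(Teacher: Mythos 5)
Your proof is correct in substance but takes a genuinely different route from the paper. The paper's proof is a two-line reduction to external machinery: one shows by induction on the reduction sequence that a graph reducible to a single dipole is planar for the regular embedding (colours in increasing cyclic order around white vertices, decreasing around black ones), and then invokes the genus-zero criterion of Ferri and Gagliardi~\cite{FerriGagliardi} to conclude that $|X(G)|$ is a sphere. You instead induct on the number of vertices and prove directly that each dipole removal preserves the homeomorphism type of $|X(G)|$, terminating at the two-vertex dipole whose space is visibly $\mathbb{S}^d$. This is precisely the classical ``proper dipole cancellation'' lemma of crystallization theory (which is what the paper's pointer to~\cite{Bonzom++} covers), and your verification that the only identifications among faces of $\sigma_\circ\cup\sigma_\bullet$ with colour set containing $i$ are the ones internal to the pair $\{v_\circ,v_\bullet\}$ is exactly the right check: since $v_\circ$ and $v_\bullet$ meet the rest of $G$ only through their two colour-$i$ edges, every residue avoiding colour $i$ that contains $v_\circ$ is exactly $\{v_\circ,v_\bullet\}$, so $B=\sigma_\circ\cup\sigma_\bullet$ is an embedded $d$-ball meeting the rest of the complex only along $\partial B=F_\circ\cup_\partial F_\bullet\cong S^{d-1}$. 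Your route is more self-contained and explains \emph{why} the topology is controlled; the paper's route is shorter but outsources the topological content entirely to the regular-genus theorem.

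One phrase in your local claim is wrong as literally written and should be repaired: ``collapsing $B$ onto $\partial B$ is a homeomorphism of the quotient spaces.'' Collapsing a $d$-ball onto its boundary sphere is never a homeomorphism (it would kill $H_d$). What you mean is the \emph{pillow collapse}: $B\cong F\times[0,1]$ with $\partial F\times[0,1]$ collapsed fibrewise, and the map squashes each fibre $\{x\}\times[0,1]$ to a point, thereby identifying the hemisphere $F_\circ$ with $F_\bullet$; the resulting quotient of $|X(G)|$ is exactly $|X(G')|$. That this quotient map does not change the homeomorphism type requires the collar argument you allude to: the two facets of $Y=|X(G)|\setminus \mathrm{int}(B)$ to which $F_\circ$ and $F_\bullet$ are glued each carry a collar inside their ambient simplices $\sigma_{u_\circ},\sigma_{u_\bullet}$, and absorbing the pillow into these collars realizes the quotient as a homeomorphism. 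With that correction (or with an explicit citation of proper dipole cancellation in place of it, which is no weaker than what the paper itself does), your argument is complete.
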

\begin{proof}
	One can notice, for example, that such graphs are planar for the embedding given by increasing (resp. decreasing) order of colours around white (resp. black) vertices, which is easily proved by induction. This implies the result by a criterion due to Ferri and Gagliardi~\cite{FerriGagliardi}.
\end{proof}
Graphs satisfying the criterion of Lemma~\ref{lemma:dipoles} are called \emph{melonic} in the physics literature.
We can now prove our lower bounds.
\begin{proof}[Proof of  lower bounds in Theorem~\ref{thm:main}]

%We will first construct many $(d+1)$-colourful graph that satisfy the property stated in Lemma~\ref{lem:colour}. 

	Let $k\in\mathbb{N}$. We first construct a multigraph $G_0=(V_0,E_0)$, with $n=4kd$ vertices. We refer to Figure~\ref{fig:lowerBounddmanifolds} for a visual support, the graph looks like a horizontal double path, and in our notation with use respectively prime and non-prime letters to denote the right and left part of the path, $a$ and $b$ for upper and lower vertices, and we use indices $P$, $V$ and $R$ for horizontal edges, vertical edges of colour in $[1..d]$, and for the remaining vertical edges of colour $(d+1)$.

Formally let $A=\{a_1, \dots, a_{kd}\}$, $A'=\{a_1', \dots, a_{kd}'\}$,  $B=\{b_1, \dots, b_{kd}\}$, $B'=\{b_1', \dots, b_{kd}'\}$ and $V_0=A\cup A'\cup B\cup B'$, thus $G_0$ has order $n=4kd$. We define the edge sets
\begin{align*}
E_P&=\{x_ix_{i+1}:\, x\in\{a,a',b,b'\}, i\in [1..kd-1]\}\cup \{a_1a_1', b_1b_1'\}, \\
E_V&=\{e_i(j)= a_i b_{i}:\, i\in [1..kd], j\in [1..d], i\notin \{j,j+1\} \,(\text{mod }d) \}\cup\{e_{kd}(1)=a_{kd}b_{kd}\}, \\
E'_V&=\{e'_i(j) = a'_i b'_{i}:\, i\in [1..kd], j\in [1..d], i\notin \{j,j+1\} \,(\text{mod }d) \} \cup\{e'_{kd}(1)=a'_{kd}b'_{kd}\},\\
E_R&=\{e_i(d+1)= a_i b_{i}:\, i\in [1..kd],  i\neq 0 \, (\text{mod }d) \}, \\
E'_R&=\{e'_i(d+1)= a'_i b'_{i}:\, i\in [1..kd],  i\neq 0 \, (\text{mod }d) \}, \\
E_0&= E_P\cup E_V\cup E'_V\cup E_R\cup E_R'\;.
\end{align*}
We colour the edges $x_ix_{i+1}\in E_{P}$ with colour $i+1\, (\text{mod } d)$, the edges $a_1a'_1$ and $b_1b_1'$ with colour $1$, and the edges $e_i(j)$ and $e'_i(j)$ with colour $j$. 

We will use the graph $G_0$ as a basis to construct many $(d+1)$-colourful graphs. Observe that the vertices $x_i$ with $x\in \{a,a',b,b'\}$ have degree $d$ if $i=0\,(\text{mod }d)$ and degree $d+1$ otherwise.

Let $\sigma,\tau$ be permutations of length $k$. The graph $G=G(\sigma,\tau)$ is constructed from $G_0$ by adding the edges $a_{id}a'_{\sigma(i)d}$ and  $b_{id}b'_{\tau(i)d}$ with colour $d+1$. Observe that $G$ is a $(d+1)$-colourful graph~(see Figure~\ref{fig:lowerBounddmanifolds}). Note that the coloured automorphism group of $G$ (automorphisms of $G$ that preserve the edge-colouring) has size at most $4$. Therefore, there are at least $(k!)^2 n!/4 \asymp  n^{n/2d} n!$ labelled $(d+1)$-colourful graphs of this form.

	It remains to show that $|X(G)|$ is a $d$-manifold. Let $x$ be a vertex of $X(G)$, we will show that the link of $x$ is a $(d-1)$-sphere using Lemma~\ref{lemma:dipoles}.
For $i\in [1..d+1]$, let $G^i$ be the subgraph of $G$ where edges of colour $i$ have been deleted. If $x$ has colour $i$ in $X(G)$, then the link of $x$ is homeomorphic to $|X(H)|$, where $H$ is a connected component of $G^i$. It suffices to prove that $|X(H)|$ is a sphere.

	We distinguish cases depending on the value of $i$. In the case $i=d+1$, we have $H=G^{d+1}$ (see Figure~\ref{fig:lowerBounddmanifolds} without the the dotted edges). This graph clearly satisfies the criterion of Lemma~\ref{lemma:dipoles}, as it can be reduced to a single dipole by successively lifting the dipoles from left to right.
Therefore $|X(H)|$ is a sphere in this case.

	In the case $i\in [1..d]$, if we remove all the edges coming from the permutations $\sigma$ and $\tau$ from the graph {$G^i$}, we obtain a set of \emph{gadgets} of size at most $2d$, see Figure~\ref{fig:LCTree2} left. If $i\neq1$ one of these gadgets contains the vertices $a_1,a_1',b_1,b_1'$ and we call it the \emph{central} gadget. In $G^i$, each non-central gadget is connected to two other gadgets through edges of colour $d+1$ that attach to vertices $x_{jd}$ for $x\in\{a,a',b,b'\}$ and $j\in [1..kd]$. Thus, $H$ is either the central gadget or a cycle of gadgets (see Figure~\ref{fig:LCTree2} right). In both cases this graph again satisfies the criterion of Lemma~\ref{lemma:dipoles}. Indeed for the central gadget this is similar to the previous case (remove dipoles from left to right), and for the cycles of gadgets one can successively remove dipoles to replace each gadget by a single edge of colour $d+1$, until only one gadget remains which is easily reduced from left to right.  Therefore $|X(H)|$ is a sphere in all cases.
 
Since the $(d-1)$-dimensional links of $X(G)$ are spheres, any smaller link also is.
It follows that $|X(G)|$ is a $d$-manifold and we conclude that $M_d(n)\succeq n^{n/2d} n!$.
\end{proof}

%\begin{figure}
%	\centering
%		\includegraphics[width=0.8\linewidth]{LCTree1.pdf}
%		\caption{The initial spanning tree for the LC construction, in the case when colour $d+1$ is removed from the graph of Figure~\ref{fig:lowerBounddmanifolds}. Missing vertical edges can be added respecting the LC-rules since multiedges are bicoloured cycles. Once vertical edges have been added, each upper horizontal edge can be added since it creates a bicoloured cycle of length $4$ with its lower counterpart.}\label{fig:LCTree1}
%\end{figure}

\begin{figure}
	\centering
		\includegraphics[width=\linewidth]{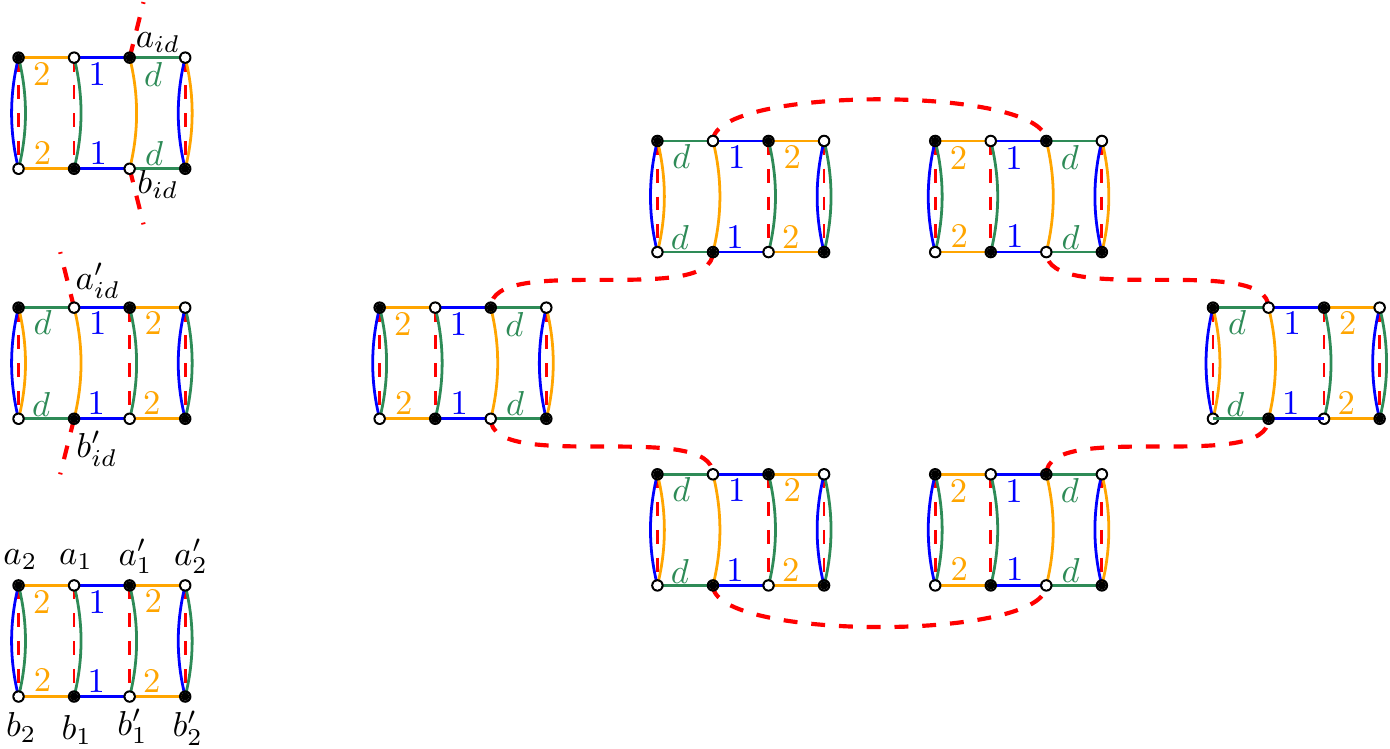}
		\caption{Left: Gadgets obtained after removing all edges of colour $i$ for an $i\in [1..d]$ (here $i=3$). Each of them has two ``outgoing'' half-edges of colour $d+1$, with the exception of the central gadget --  represented here at the bottom.  Right: After removing all edges of colour $i$, each connected component other than the central gadget is a ``cycle of gadgets''.}\label{fig:LCTree2}
\end{figure}

\section{Remaining proofs}
\label{sec:remainingProofs}

It remains to prove Theorem~\ref{thm:proba}, and the lower bound in~\eqref{eq:upperBoundH}. Both proofs are simple variants of the previous ones.

\begin{proof}[Proof of Theorem~\ref{thm:proba}]
	The theorem directly follows from the two following claims:
	\begin{itemize}[itemsep=0pt, topsep=0pt, parsep=0pt, leftmargin=20pt]
		\item[(a)] for any $c>0$ there exist $K>0$ such that the number of coloured triangulations of $3$-manifolds with $n$ labelled tetrahedra and with more that $\frac{Kn}{\log n}$ vertices is at most $2^{-c n} n^{\frac{7n}{6}} $;% for $n$ large enough;
		\item[(b)] there exists $c_0>0$ such that the number of coloured triangulations of $3$-manifolds with $n$ labelled tetrahedra and with less than $\frac{n}{\log n}$ vertices is at least $2^{-c_0 n} n^{\frac{7n}{6}}$.
	\end{itemize}
\medskip	

	We first prove claim (a).  Let $K>0$ (to be chosen later) and let $G$ be a $4$-colourful graph such that $X(G)$ has more than $\frac{Kn}{\log{n}}$ vertices and its underlying space is a $3$-manifold. 
	Since vertices of $X(G)$ are in bijection with connected components of $3$-coloured subgraphs of $G$, there exist distinct colours $i,j,k$ such that $\kappa_{i,j,k} \geq \frac{K n}{4 \log n}$. By~\eqref{eq:last} 
	%in the proof of Lemma~\ref{lemma:3count} 
	with $\cI=\{i,j,k\}$ and up to relabelling $i,j,k$, we have
	$$
	\kappa_{i,j} -\kappa_{i,j,k} \leq \frac{n}{6} -\frac{1}{3} \kappa_{i,j,k} \leq \frac{n}{6}-\frac{Kn}{12\log{n}}.
	$$
We can upper bound the number of ways to construct such a graph $G$ as in the proof of our main theorem. We first choose the planar graph $G_{\{i,j,\ell\}}$ where $\ell\in [1..4]\setminus \cI$. There are at most $n^n$ ways to do it. As in Lemma~\ref{lemma:treecount}, once this graph has been chosen there are at most $2^{5n} n^{\kappa_{i,j} -\kappa_{i,j,k}}$ ways to place the edges of colour $k$. We thus have at most  $2^{(5-K/12)n}n^{\frac{7n}{6}} $ choices of graphs in total, which is less than $2^{-c n}n^{\frac{7n}{6}} $ provided we take $K=K(c)$ large enough.

	We now prove claim (b) by following the construction given in the proof of the lower bound of Theorem~\ref{thm:main} (see Section~\ref{sec:lowerBounds}). The $4$-coloured graph constructed in that proof depends on two permutations $\sigma$ and $\tau$ that describe the incidences of the edges of colour $d+1$. By construction,  $\kappa_{1,2,3}=1$, and for any $\cI\subseteq [1..4]$ with $|\cI|=3$ and $\cI\ni 4$, $\kappa_\cI$ is the number of cycles of the permutation $\sigma \tau^{-1}$ (or this number plus one, for choices of $\cI$ that involve the central gadget). If $\sigma$ and $\tau$ are chosen uniformly at random, the expected number of cycles of $\sigma \tau^{-1}$ is $O(\log n)$. Therefore with positive probability,
%the number of cycles of $\sigma\tau^{-1}$ is at most $O(\log n)$, and 
the number of vertices in $X(G)$ is $O(\log{n})$, which is smaller than $\frac{n}{\log n}$ for $n$ large enough. The claim follows since there are $ 2^{\Omega(n)}n^{7n/6}$ such graphs $G$.
%	 so by linearity of expectation, the expected number of vertices in $X(G)$ is $O(\log n)$. It follows that with probability $\Theta(1)$, the number of vertices is at most $O(\log n)$, which is smaller than $\frac{n}{\log n}$ for $n$ large enough.
\end{proof}

\begin{proof}[{Proof of the lower bound in~\eqref{eq:upperBoundH}}]
	The construction is similar to the one of Section~\ref{sec:lowerBounds} and we will only sketch it. Let $G$ be a graph obtained in our lower bound construction for $d=3$~(see Figure~\ref{fig:lowerBounddmanifolds}). For $d\geq 4$, we obtain a $(d+1)$-colourful graph 
	%by recolouring the edges in colour $4$ with colour $d+1$ and by 
	adding edges $a_i b_i$ and $a_i' b_i'$ for every $i\in [1..kd]$ and every colour in $\{5,\dots, d+1\}$. Note that there are $M_3(n)\succeq n^{n/6}n!$ such graphs. Using a similar analysis as the one in Section~\ref{sec:lowerBounds}, it is easy to see that for any set of colours $\cI$ of size $3$, the graph $G_\cI$ is planar.
%	
%	We will use similar graphs,  but with a different placement of the edges of colour $(d+1)$. More precisely our graphs have the same set of vertices and the same set of edges of colours in $\{1,2,\dots,d\}$ as the ones of Section~\ref{sec:lowerBounds} (see Figure~\ref{fig:lowerBounddmanifolds}). For edges of colour $d+1$, we place a ``vertical'' edge between $a_i$ and $b_i$, and between $a'_i$ and $b'_i$, if and only if $i \not\equiv 0 \mod 3$. If $i\equiv 0 \mod 3$, we place a half-edge on the top of $a_i$ and $a'_i$, and on the bottom of $b_i$ and $b'_i$. We then glue together the top half-edges, and the bottom ones, according to two permutations $\sigma, \tau$. Note that $\sigma$ and $\tau$ are now permutations of size $n/12$ (as opposed to $n/(4d)$ in the previous construction). We therefore obtain $(n/12)!^2 \asymp n^{n/6}$ different constructions. It is easy to see that for any set of colours $\cI$ of size $3$, the graph $G_\cI$ is planar: for $\cI \ni (d+1)$ this is a direct check, and for $\cI \ni (d+1)$ this follows as before by observing that after removing any $(d-2)$ colours in $[1..d]$ from the graph starting the construction, each gadget has either zero or two hanging half-edges. Therefore adding edges of colour $(d+1)$ can only create cycles of gadgets or isolated gadgets, all of which are easily seen to be planar.
\end{proof}

%GC-jan27: added ackowledgements to the referees
\subsection*{Acknowledgements}
We thank the anonymous referees for their valuable comments, and one of them for correcting a mistake in the proof of the lower bound.

%
%\begin{proof}[Proof of all lower bounds stated in this paper]
%
%We will first construct many $(d+1)$-colourful graph whose corresponding triangulations will 
%
%
%\end{proof}
%
%
%TODO: lower bounds!!

\bibliographystyle{plain}
\bibliography{spheres}

\begin{thebibliography}{10}

\bibitem{AnoBenedetti}
Karim Adiprasito and Bruno Benedetti.
\newblock A cheeger-type exponential bound for the number of triangulated
  manifolds.
\newblock {\em arXiv preprint arXiv:1710.00130}, 2017.

\bibitem{ambjorn1991three}
Jan Ambj{\o}rn, Bergfinnur Durhuus, and Thordur J\'onsson.
\newblock Three-dimensional simplicial quantum gravity and generalized matrix
  models.
\newblock {\em Modern Physics Letters A}, 6(12):1133--1146, 1991.

\bibitem{QuantumGeometry}
Jan Ambj{\o{}}rn, Bergfinnur Durhuus, and Thordur J\'onsson.
\newblock {\em Quantum geometry}.
\newblock Cambridge Monographs on Mathematical Physics. Cambridge University
  Press, Cambridge, 1997.
\newblock A statistical field theory approach.

\bibitem{benedetti2010locally}
Bruno Benedetti.
\newblock On locally constructible manifolds.
\newblock {\em PhD thesis}, 2010.

\bibitem{BenedettiOther}
Bruno Benedetti.
\newblock Mogami manifolds, nuclei, and 3d simplicial gravity.
\newblock {\em Nuclear Physics B}, 919:541 -- 559, 2017.

\bibitem{BenedettiZiegler}
Bruno Benedetti and G\"unter~M. Ziegler.
\newblock On locally constructible spheres and balls.
\newblock {\em Acta Math.}, 206(2):205--243, 2011.

\bibitem{Bonzom++}
Valentin Bonzom, Razvan Gurau, Aldo Riello, and Vincent Rivasseau.
\newblock Critical behavior of colored tensor models in the large {$N$} limit.
\newblock {\em Nuclear Phys. B}, 853(1):174--195, 2011.

\bibitem{BrachoMontejano}
Javier Bracho and Luis Montejano.
\newblock The combinatorics of colored triangulations of manifolds.
\newblock {\em Geom. Dedicata}, 22(3):303--328, 1987.

\bibitem{Carrance}
Ariane Carrance.
\newblock Uniform random colored complexes.
\newblock {\em Random Structures Algorithms}, 55(3):615--648, 2019.

\bibitem{ColletEckmannYounan}
Pierre Collet, Jean-Pierre Eckmann, and Maher Younan.
\newblock Trees of nuclei and bounds on the number of triangulations of the
  3-ball.
\newblock {\em Comm. Math. Phys.}, 325(1):259--289, 2014.

\bibitem{dunfield2006finite}
Nathan~M. Dunfield and William~P. Thurston.
\newblock Finite covers of random 3-manifolds.
\newblock {\em Inventiones mathematicae}, 166(3):457--521, 2006.

\bibitem{DJ}
Bergfinnur Durhuus and Thordur J\'onsson.
\newblock Remarks on the entropy of {$3$}-manifolds.
\newblock {\em Nuclear Phys. B}, 445(1):182--192, 1995.

\bibitem{DJcausal}
Bergfinnur Durhuus and Thordur J\'onsson.
\newblock Exponential bounds on the number of causal triangulations.
\newblock {\em Comm. Math. Phys.}, 340(1):105--124, 2015.

\bibitem{FerriGagliardi}
Massimo Ferri and Carlo Gagliardi.
\newblock The only genus zero {$n$}-manifold is {$S^{n}$}.
\newblock {\em Proc. Amer. Math. Soc.}, 85(4):638--642, 1982.

\bibitem{gromov2010spaces}
Misha Gromov.
\newblock Spaces and questions.
\newblock In {\em Visions in mathematics}, pages 118--161. Springer, 2010.

\bibitem{Gur16}
Razvan Gurau.
\newblock Invitation to random tensors.
\newblock {\em SIGMA Symmetry Integrability Geom. Methods Appl.}, 12:Paper No.
  094, 12, 2016.

\bibitem{Mogami}
Tsuguo Mogami.
\newblock On the entropy bound of three-dimensional simplicial gravity.
\newblock {\em Progr. Theoret. Phys.}, 94(1):157--161, 1995.

\bibitem{rivasseau2013spheres}
Vincent Rivasseau.
\newblock Spheres are rare.
\newblock {\em EPL (Europhysics Letters)}, 102(6):61001, 2013.

\bibitem{rovelli2007quantum}
Carlo Rovelli.
\newblock {\em Quantum gravity}.
\newblock Cambridge university press, 2007.

\bibitem{shastri2016basic}
Anant~R. Shastri.
\newblock {\em Basic algebraic topology}.
\newblock CRC Press, 2016.

\bibitem{Stanley:EC2}
Richard~P. Stanley.
\newblock {\em {E}numerative combinatorics. {V}ol. 2}, volume~62 of {\em
  Cambridge Studies in Advanced Mathematics}.
\newblock Cambridge University Press, Cambridge, 1999.
\newblock With a foreword by Gian-Carlo Rota and appendix 1 by Sergey Fomin.

\bibitem{Tutte}
William~T. Tutte.
\newblock A census of planar triangulations.
\newblock {\em Canad. J. Math.}, 14:21--38, 1962.

\bibitem{wormald1999models}
Nicholas~C. Wormald.
\newblock Models of random regular graphs.
\newblock {\em London Mathematical Society Lecture Note Series}, pages
  239--298, 1999.

\end{thebibliography}

\end{document}